\newcommand{\integers}{\mathbb{Z}}
\newcommand{\complexes}{\mathbb{C}}
\DeclareMathOperator{\Gr}{Gr}
\DeclareMathOperator{\Span}{Span}
\newcommand{\Lm}{\Lambda}
\newcommand{\cM}{\mathcal{M}}
\newtheorem{thm}{Theorem}[section]
\newtheorem{prop}[thm]{Proposition}
\newtheorem{cor}[thm]{Corollary}
\newtheorem{lem}[thm]{Lemma}
\newtheorem{defn}[thm]{Definition}
\DeclareMathOperator{\Bd}{Bound}
\newcommand{\lm}{\lambda}
\newcommand{\bdot}[2]{
\draw [black, fill = black] (#1, #2) circle [radius = 0.1];
}
\newcommand{\wdot}[2]{
\draw [black, fill = white] (#1, #2) circle [radius = 0.1];
}
\newcommand{\edge}[3]{
\draw (#1,#2) -- (#1, #2 + #3);
\bdot{#1}{#2}
\wdot{#1}{#2 + #3}
}
\title{Combinatorics of Symmetric Plabic Graphs}
\author{Rachel Karpman \and Yi Su}
\begin{document}

\begin{abstract}
A \emph{plabic graph} is a planar bicolored graph embedded in a disk, which satisfies some combinatorial conditions.  Postnikov's \emph{boundary measurement map} takes the space of positive edge weights of a plabic graph $G$ to a \emph{positroid cell} in a totally nonnegative Grassmannian.  In this note, we investigate plabic graphs which are symmetric about a line of reflection, up to reversing the colors of vertices.  These \emph{symmetric plabic graphs} arise naturally in the study of total positivity for the Lagrangian Grassmannian.  We characterize various combinatorial objects associated with symmetric plabic graphs, and describe the subset of a Grassmannian which can be realized by \emph{symmetric weightings} of symmetric plabic graphs.  
\end{abstract}

% history:
% \received{\smonth{1} \sday{1}, \syear{0000}}

\maketitle

\section{Introduction}

A plabic graph is a planar graph embedded in a disk, with vertices colored black or white.  Postnikov introduced plabic graphs as a tool for studying the \emph{positroid stratification} of the \emph{totally nonnegative Grassmannian} $\Gr_{\geq 0}(k,n)$ \cite{Pos06}.

The totally nonnegative Grassmannian is the region of the Grassmannian $\Gr(k,n)$ where all Pl\"ucker coordinates are nonnegative real numbers, up to multiplication by a common scalar.  Postnikov defined a stratification of $\Gr_{\geq 0}(k,n)$ by \emph{positroid cells}, each defined as the locus in $\Gr_{\geq 0}(k,n)$ where some set of Pl\"ucker coordinates vanish.  The resulting \emph{positroid stratification} of $\Gr_{\geq 0}(k,n)$ has a rich geometric and combinatorial structure.  There are numerous combinatorial objects which index positroid cells, including \emph{bounded affine permutations}, \emph{Grassmann necklaces}, and a class of matroids called \emph{positroids} \cite{Pos06,KLS13}.  

Given a plabic graph $G$, Postnikov's \emph{boundary measurement map} takes the space of positive real edge weights of $G$ surjectively to a positroid cell $\Pi_G$ in $\Gr_{\geq 0}(k,n)$ for some values of $k$ and $n$.  Moreover, the bounded affine permutation, Grassmann necklace, and matroid of $\Pi_G$ are encoded in structure of $G$.  Note that plabic graphs are not in bijection with positroid cells; rather, for each positroid cell, we have a family of plabic graphs. 

In this note, we study plabic graphs which satisfy a symmetry condition.  See Figure \ref{ex} for an example.  These \emph{symmetric plabic graphs} arise in the theory of total positivity for the Lagrangian Grassmannian $\Lm(2n)$, the moduli space of maximal isotropic subspaces with respect to a symplectic bilinear form.  The connection with $\Lm(2n)$, as well as additional results on the combinatorics of symmetric plabic graphs, will be discussed in a forthcoming paper by the first author \cite{Kar15}.  

\begin{figure}
\centering
\begin{tikzpicture}[scale = 0.75]
\draw [gray!30] (0,3.5) -- (0,-3.5);
\draw (0,0) circle (3);
\draw ({2*cos(60)},{2*sin(60)}) -- ({-2*cos(60)},{2*sin(60)})  -- ({-2*cos(60)},{-2*sin(60)})  -- ({2*cos(60)},{-2*sin(60)}) --  ({2*cos(60)},{2*sin(60)});
\draw ({2*cos(60)},{2*sin(60)}) -- ({3*cos(60)},{3*sin(60)});
\draw ({-2*cos(60)},{2*sin(60)}) -- ({-3*cos(60)},{3*sin(60)});
\draw ({-2*cos(60)},{-2*sin(60)}) -- ({-3*cos(60)},{-3*sin(60)});
\draw ({2*cos(60)},{-2*sin(60)}) -- ({3*cos(60)},{-3*sin(60)});
\draw ({3*cos(30)},{3*sin(30)}) -- (2,0) -- ({3*cos(30)},{-3*sin(30)});
\draw ({-3*cos(30)},{3*sin(30)}) -- (-2,0) -- ({-3*cos(30)},{-3*sin(30)});
\bdot{{2*cos(60)}}{{2*sin(60)}};\wdot{{-2*cos(60)}}{{2*sin(60)}};\bdot{{-2*cos(60)}}{{-2*sin(60)}};\wdot{{2*cos(60)}}{{-2*sin(60)}};
\wdot{{3*cos(60)}}{{3*sin(60)}};\bdot{{3*cos(30)}}{{3*sin(30)}};\wdot{2}{0};\bdot{{3*cos(30)}}{{-3*sin(30)}};\bdot{{3*cos(60)}}{{-3*sin(60)}};
\bdot{{-3*cos(60)}}{{3*sin(60)}};\wdot{{-3*cos(30)}}{{3*sin(30)}};\bdot{-2}{0};\wdot{{-3*cos(30)}}{{-3*sin(30)}};\wdot{{-3*cos(60)}}{{-3*sin(60)}};
\node [above right] at ({3*cos(60)},{3*sin(60)}) {1};
\node [above right] at ({3*cos(30)},{3*sin(30)}) {2};
\node [below right] at ({3*cos(-30)},{3*sin(-30)}) {3};
\node [below right] at ({3*cos(-60)},{3*sin(-60)}) {4};
\node [below left] at ({3*cos(240)},{3*sin(240)}) {5};
\node [below left] at ({3*cos(210)},{3*sin(210)}) { 6};
\node [above left] at ({3*cos(150)},{3*sin(150)}) {7};
\node [above left] at ({3*cos(120)},{3*sin(120)}) {8};
\end{tikzpicture}
\caption{A symmetric plabic graph.}
\label{ex}
\end{figure}
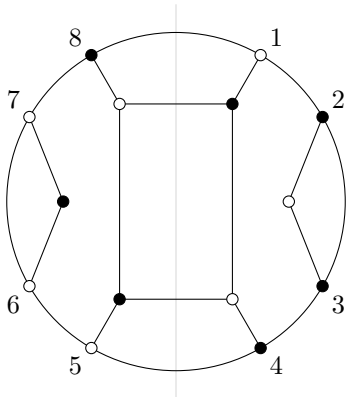

Let $G$ denote a plabic graph, and let $N$ denote the underlying uncolored network. Suppose $N$ is symmetric with respect to reflection through a distinguished diameter $d$ of the disc. Let  $V$ be the vertex set of $E$, and let $r:V \rightarrow V$ map each vertex $v \in V$ to its mirror image across the line $d$.  Then $G$ is a \emph{symmetric plabic graph} if the following conditions hold:
\begin{enumerate}
\item $G$ has no vertices on the line $d$, although edges may cross $d$
\item For each $v \in V$, the vertices $v$ and $r(v)$ have opposite colors.
\end{enumerate}

We briefly summarize our results. In Theorem \ref{graphsLagrangian}, we characterize the positroids, Grassmann necklaces, and bounded affine permutations associated with symmetric plabic graphs.  We then consider \emph{symmetric weightings} of symmetric plabic graphs; that is, weightings where each edge $(u,v)$ has the same weight as its reflection $(r(u),r(v))$.  Theorem \ref{allpt} and Corollary \ref{both} give a complete description of the set of points in $\Gr_{\geq 0}(k,n)$ corresponding to such weightings of symmetric plabic graphs.  This is a subvariety, cut out set-theoretically by linear equations, which we call the \emph{symmetric part} of $Gr(k,n)$.  With the right choice of conventions, the symmetric part of $\Gr_{\geq 0}(k,n)$ is precisely the totally nonnegative part of the Lagrangian Grassmannian \cite{Kar15}.   Finally, Theorem \ref{symbrd} gives an explicit construction which yields a symmetric weighting of a symmetric plabic graph for each point in the symmetric part of $\Gr_{\geq 0}(k,n)$.  In particular, we construct a symmetric \emph{bridge graph} for each point in $\Gr_{\geq 0}(k,n)$.  Bridge graphs are a special class of plabic graphs which appear as a computational tool in particle physics \cite{ABCGPT14}.

\section{Background}

\subsection{Notation}

For natural numbers $k \leq n$, let $[n]$ denote the set $\{1,2,\ldots,n\}$, and let ${{[n]}\choose{k}}$ denote the set of all $k$-element subsets of $[n]$.  For $a \in [n]$, let $\leq_a$ denote the cyclic shift of the usual linear order on $n$ given by 
\begin{equation}a < a + 1 < \ldots < n < 1 < \ldots < a-1.\end{equation}
Note that $\leq_1$ is the usual order $\leq$.  We extend this to a partial order on ${{[n]}\choose{k}}$, by setting $I \leq_a J$ if we have $i_{\ell} \leq_a j_{\ell}$ for all $\ell \in [k]$, where 
\begin{equation}I = \{i_1 <_a i_2 <_a \ldots <_a i_k\} \text{ and }J = \{j_1 <_a j_2 <_a \ldots <_a j_k\}.\end{equation}
For $a,b \in [n]$, we define the \emph{cyclic interval} $[a,b]^{cyc}$ by
\begin{equation}[a,b]^{cyc} = \begin{cases}
\{a,a+1,\ldots,b\} & a \leq b\\
\{a,a+1,\ldots,n-1,n,1 \ldots,b\} & a > b
\end{cases}.\end{equation}
(Note that this differs slightly from Postnikov's convention.)  If we arrange the elements of $[n]$ clockwise around a circle, then $[a,b]^{cyc}$ represents a sequence of consecutive numbers. 

Let $S_n$ be the symmetric group in $n$ letters, and let $s_i$ denote the simple transposition $(i,i+1)$ which switches $i$ and $i+1$.  Let $\mathbb{R}_{>0}$ denote the positive reals.

\subsection{Grassmannians and Pl\"{u}cker coordinates}

Let $\Gr(k,n)$ denote the Grassmannian of $k$-dimensional linear subspaces of the vector space $\complexes^n$.  We may realize $\Gr(k,n)$ as the space of full-rank $k \times n$ matrices modulo the left action of $\text{GL}(k)$, the group of invertible $k\times k$ matrices; a matrix $M$ represents the space spanned by its rows.  

The \emph{Pl{\"u}cker embedding}, which we denote $p$, maps $\Gr(k,n)$ into the projective space 
\begin{math}\mathbb{P}^{ {n\choose k}-1}\end{math}
with homogeneous coordinates $x_J$ indexed by the elements of 
\begin{math}{{[n]}\choose k}.\end{math}
For 
\begin{math} J \in {[n] \choose k}\end{math}
let $\Delta_J$ denote the minor with columns indexed by $J$.  Let $V$  be a $k$-dimensional subspace of $\complexes^n$ with representative matrix $M$.  Then $p(V)$ is the point defined by 
\begin{math} x_J = \Delta_J(M)\end{math}.
This map embeds $\Gr(k,n)$ as a smooth projective variety in 
\begin{math} \mathbb{P}^{ {n\choose k}-1}.\end{math}
The homogeneous coordinates $\Delta_J$ are known as \emph{Pl{\"u}cker  coordinates} on $\Gr(k,n)$.  The \emph{totally nonnegative Grassmannian}, denoted $\Gr_{\geq 0}(k,n),$ is the subset of $\Gr(k,n)$ whose Pl{\"u}cker coordinates are all nonnegative real numbers, up to multiplication by a common scalar. 

Let \begin{math}V \in \Gr_{\geq 0}(k,n)\end{math}.
The indices of the non-vanishing Pl{\"u}cker  coordinates of $V$ give a set
\begin{math} \mathcal{J} \subseteq {{[n]}\choose k}\end{math}
called the \emph{matroid} of $V$.  We define the \emph{matroid cell} 
\begin{math} \mathcal{M}_{\mathcal{J}}\end{math}
as the locus of points $V \in \Gr_{\geq 0}(k,n)$ with matroid $\mathcal{J}$.  The matroids $\mathcal{J}$ for which $\mathcal{M}_{\mathcal{J}}$ is nonempty are called \emph{positroids}, and the corresponding matroid cells are called positroid cells. Positroid cells form a \emph{stratification} of $\Gr_{\geq 0}(k,n)$.   That is, the closure of a positroid cell $\Pi$ in $\Gr_{\geq 0}(k,n)$ is the union of $\Pi$ and some lower-dimensional positroid cells \cite{Pos06}.

\subsection{Plabic graphs}

A \emph{plabic graph} is a planar graph embedded in a disk, with each vertex colored black or white.  The boundary vertices are numbered $1,2,\ldots,n$ in clockwise order, and all boundary vertices have degree one. We call the edges adjacent to boundary vertices \emph{legs} of the graph, and a leaf adjacent to a boundary vertex a \emph{lollipop}.  A \emph{white lollipop} is a white leaf adjacent to a black boundary vertex, while a \emph{black} lollipop is the opposite. 

Postnikov introduced plabic graphs in \cite[Section 11.5]{Pos06}.  We follow the conventions of \cite{Lam13b}, which are more restrictive than Postnikov's. In particular, we require a plabic graph to be bipartite, with the black and white vertices forming the partite sets.  An \emph{almost perfect matching} on a plabic graph is a subset of its edges which covers each interior vertex exactly once; boundary vertices may or may not be covered.  We consider only plabic graphs which admit an almost perfect matching.  Finally, we require that no edge in a plabic graph connects two boundary vertices.

We define a collection of paths and cycles in $G$, called \emph{trips}, as follows.  We begin by traversing an edge $\{u,v\}$ of $G$, from $u$ to $v$. We then proceed according to the \emph{rules of the road}: turn (maximally) left at every white internal vertex, and (maximally) right at every black internal vertex. Continuing in this fashion, we eventually reach a boundary vertex.  The resulting directed path is a \emph{trip} in $G$.  See Figure \ref{trip} for an example.  We repeat this process for every boundary vertex.  If the resulting collection of trips covers every edge of $G$ twice, once in each direction, we are done.  

Otherwise, we find an internal edge $e=\{u,v\}$ such that no trip covers $e$ in the direction $u \rightarrow v$.  We begin by tracing $e$ in this direction, and proceed according to the rules of the road, until we find ourselves one against about to trace the edge $u \rightarrow v$.  The resulting directed cycle is a trip. Repeat this process until each edge of $G$ is covered twice by trips, once in each direction.  

\begin{figure}
\centering
\begin{tikzpicture}[scale = 0.75]
\draw (0,0) circle ({4*cos{45}});
\draw (-1,1) -- (1,1) -- (1,-1) -- (-1,-1) -- (-1,1);
\draw (1,1) -- (2,2);
\draw (-1,1) -- (-2,2);
\draw (-1,-1) -- (-2,-2);
\draw (1,-1) -- (2,-2);
\bdot{1}{1};\wdot{-1}{1};\bdot{-1}{-1};\wdot{1}{-1};
\wdot{2}{2};\bdot{-2}{2};\wdot{-2}{-2};\bdot{2}{-2};
\draw [dashed, ->] (0.8,-1.2) -- (-0.8,-1.2);
\draw [dashed, ->] (-1.2,-0.8) -- (-1.2,0.8);
\draw [dashed, ->] (1.72,-2) -- (1,-1.28);
\draw [dashed, ->] (-1.28,1) -- (-2,1.72);
\node [above right] at (2,2) {$1$};
\node [below right] at (2,-2) {$2$};
\node [below left] at (-2,-2) {$3$};
\node [above left] at (-2,2) {$4$};
\end{tikzpicture}
\caption{A trip in a plabic graph $G$.  We have $\bar{f}_G(2) = 4$.}
\label{trip}
\end{figure}
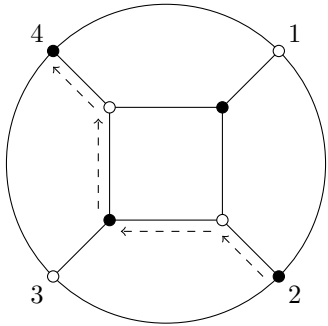

Given a plabic graph $G$ with $n$ boundary vertices, we define the \emph{trip permutation} 
\begin{math} \bar{f}_G \in S_n\end{math}
of $G$ by setting 
\begin{math} \bar{f}_G(a) = b\end{math}
if the trip that starts at boundary vertex $a$ ends at boundary vertex $b$.  

The boundary measurement map is defined only for \emph{reduced} plabic graphs.  Postnikov defined reduced plabic graphs in terms of certain local transformations of plabic graphs, and gave a criterion for a plabic graph $G$ to be reduced \cite[Section 13]{Pos06}. We take this criterion as the \emph{definition} of a reduced graph.

\begin{defn}
A plabic graph $G$ is \emph{reduced} if it satisfies the following criteria:
\begin{enumerate}
\item $G$ has no trips which are cycles.
\item $G$ has no leaves, except perhaps some which are adjacent to boundary vertices. 
\item No trip uses the same edge twice, once in each direction, unless that trip starts (and ends) at a boundary vertex connected to a leaf.
\item No trips $T_1$ and $T_2$ share two edges $e_1,e_2$ such that $e_1$ comes before $e_2$ in both trips.  
\end{enumerate}
\end{defn}

If $G$ is a reduced graph, each fixed point of $\bar{f}_G$ corresponds to a lollipop \cite{Pos06}.  

\subsection{The boundary measurement map}

Let $G$ be a reduced plabic network with $e$ edges, and assign weights 
\begin{math}t_1,\ldots,t_e\end{math}
to the edges of $G$.  Postnikov defined a surjective map from the space of positive real edge weights of $G$ to some positroid cell $\Pi_G$
in $\Gr_{\geq 0}(k,n)$, called the \emph{boundary measurement map} \cite[Section 11.5]{Pos06}.   Postnikov, Speyer and Williams re-cast this construction in terms of almost perfect matchings \cite[Section 4-5]{PSW09}, an approach Lam developed further in \cite{Lam13b}.  We use the definition of the boundary measurement map found in \cite{Lam13b}. 

As mentioned above, an \emph{almost perfect matching} of a plabic graph $G$ is a collection of edges of $G$ which covers each internal vertex exactly once.  (Boundary vertices may or may not be covered.)
For $P$ an almost perfect matching on a plabic graph $G$ with $e$ edges, let 
\begin{equation}
\begin{split}
\partial(P)   =  & \{\text{black boundary vertices used in $P$}\} \\
&  \cup \{\text{white boundary vertices \emph{not} used in $P$}\}
\end{split}
\end{equation}
We define the \emph{boundary measurement map} 
\begin{equation}\partial_G: \left(\mathbb{R}_{> 0}\right)^e \rightarrow \mathbb{P}^{{n \choose k}-1}\end{equation}
to be the map which sends 
\begin{math}(t_1,\ldots,t_e)\end{math}
to the point with homogeneous coordinates 
\begin{equation}\Delta_J = \sum_{\partial(P)=J} t^P\end{equation}
where the sum is over all matchings $P$ of $G$, and $t^P$ is the product of the weights of all edges used in $P$.

The boundary measurement map $\partial_G$ is surjective onto the totally nonnegative cell $\Pi_G$.  However, it is almost never injective, due to the existence of \emph{gauge transformations}.  Let $v$ be a vertex of $G$, and let $\omega$ be a weighting of $G$ with $\partial_G(\omega) = X$.  Multiplying the weights of all edges incident at $v$ by $\lm \in \mathbb{R}_{> 0}$ simply multiplies all Pl\"ucker coordinates by $\lm$, and hence yields a new weighting $\omega'$ with $\partial_G(\omega') = X$.  Conversely, if two weightings of $G$ map to the same point in $\Gr_{\geq 0}(k,n)$, then they are related by some sequence of gauge transformations \cite{Pos06}.

\subsection{Bounded affine permutations}

Bounded affine permutations are one of many families of combinatorial objects which index positroid cells.  There is a natural bijection between bounded affine permutations and Postnikov's \emph{decorated permutations}, so Postnikov's results about the latter translate easily to statements about the former \cite{Pos06, KLS13}.  

\begin{defn} An \textbf{affine permutation} of order $n$ is a bijection 
\begin{math}f:\integers \rightarrow \integers\end{math}
which satisfies the condition 
\begin{equation} \label{affine} f(i+n) = f(i)+n \end{equation}
 for all 
 \begin{math} i \in \integers.\end{math}
 The affine permutations of order $n$ form a group, which we denote 
 \begin{math} \widetilde{S}_n.\end{math}
 \end{defn}

We may embed $S_n$ in $\widetilde{S}_n$ by extending each permutation periodically, in accordance with \eqref{affine}. 
Conversely, for $f$ an affine permutation, there is a unique permutation $\bar{f} \in S_n$ such that for all $i \in [n]$, we have $\bar{f}(i) \cong f(i) \pmod{n}$.

\begin{defn}
An affine permutation $f$ is a \emph{bounded affine permutation} of type $(k,n)$ if it satisfies the following conditions
\begin{enumerate}
\item  $\displaystyle \frac{1}{n} \sum_{i=1}^n (f(i)-i)=k.$
\item $i \leq \bar{f}(i) \leq i+n \text{ for all }i \in \integers.$
\end{enumerate}
We write $\Bd(k,n)$ for the set of all bounded affine permutations of type $(k,n)$.  
\end{defn}

Let $G$ be a reduced plabic graph with $n$ boundary vertices, and trip permutation $\bar{f}_G$.  Suppose we have
\begin{equation}k = |\{i \in [n] : \bar{f}_G(i) < i \text{ or $i$ is a white lollipop of $G$}\}|.\end{equation}
Then $G$ has an associated bounded affine permutation $f_G$ of type $(k,n)$ defined by setting 
\begin{equation}f_G(i) = \begin{cases}\bar{f}_G(i) & \bar{f}_G(i) > i \text{ or $G$ has a black lollipop at $i$}\\
\bar{f}_G(i)+n &  \bar{f}_G(i) < i \text{ or $G$ has a white lollipop at $i$}
\end{cases}
\end{equation}
for $i \in [n],$ and extending periodically using \eqref{affine}.  For $f$ a bounded affine permutation, we say $f$ has a \emph{white fixed point} at $i$ if $f(i) = i+n$, and that $f$ has a \emph{black fixed point} at $i$ if $f(i) = i$.

Bounded affine permutations of type $(k,n)$ are in bijection with positroid cells in $\Gr_{\geq 0}(k,n)$ \cite{Pos06}.  Let $G$ be as above, and weight the edges of $G$ with indeterminates.  The \emph{boundary measurement map} carries the space of positive real edge weights of $G$ to the positroid cell $\Pi_G$ corresponding to $\bar{f}_G$ \cite{Pos06}.  There is a family of reduced graphs for each bounded affine permutation, and hence for each positroid cell.

We represent bounded affine permutations visually using \emph{chord diagrams}, introduced in \cite[Section 16]{Pos06}.  A chord diagram for a bounded affine permutation $f$ is a circle with vertices labeled $1,2,\ldots,n$ in clockwise order.  We draw arrows from vertex $i$ to vertex $\bar{f}(i)$ for all $i \in [n]$.  By convention, if $i$ is a white fixed point of $f$, we draw a clockwise loop at $i$; if $i$ is a black fixed point, we draw a counter-clockwise loop.  

Let $(i,\bar{f}(i))$ and $(j,\bar{f}(j))$ be a pair of chords in the chord diagram of a bounded affine permutation $f$.  This pair represents a crossing if $\bar{f}(j) \in [i,\bar{f}(i)]^{cyc}$ and $j \in [\bar{f}(i),i]^{cyc}$, so the two chords intersect.  See Figure \ref{crossings}.

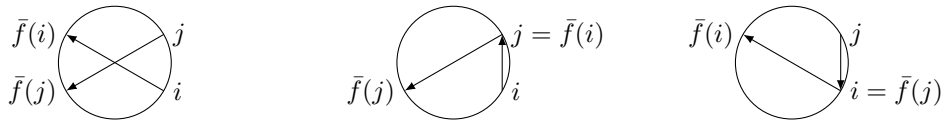
\begin{figure}
\centering
\begin{tikzpicture}[scale = 0.75]
\draw (0,0) circle (1);
\draw[->, >=latex] ({cos(30)},{sin(30)}) to (-{cos(30)},-{sin(30)});
\draw[->, >=latex] ({cos(-30)},{sin(-30)}) to (-{cos(-30)},-{sin(-30)});
\node[right] at ({cos(30)},{sin(30)}){$j$};
\node[left] at (-{cos(30)},{sin(30)}) {$\bar{f}(i)$};
\node[right] at ({cos(-30)},{sin(-30)}) {$i$};
\node[left] at (-{cos(-30)},{sin(-30)}) {$\bar{f}(j)$};
\begin{scope}[xshift = 6 cm]
\draw (0,0) circle (1);
\draw[->, >=latex] ({cos(30)},{sin(30)}) to (-{cos(30)},-{sin(30)});
\draw[->, >=latex] ({cos(-30)},{sin(-30)}) to ({cos(30)},{sin(30)});
\node[right] at ({cos(30)},{sin(30)}){$j = \bar{f}(i)$};
\node[right] at ({cos(-30)},{sin(-30)}) {$i$};
\node[left] at (-{cos(-30)},{sin(-30)}) {$\bar{f}(j)$};
\end{scope}
\begin{scope}[xshift = 12 cm]
\draw (0,0) circle (1);
\draw[->, >=latex] ({cos(30)},{sin(30)}) to ({cos(-30)},{sin(-30)});
\draw[->, >=latex] ({cos(-30)},{sin(-30)}) to (-{cos(-30)},-{sin(-30)});
\node[right] at ({cos(30)},{sin(30)}){$j$};
\node[left] at (-{cos(30)},{sin(30)}) {$\bar{f}(i)$};
\node[right] at ({cos(-30)},{sin(-30)}) {$i=\bar{f}(j)$};
\end{scope}
\end{tikzpicture}
\caption{Crossings in a chord diagram.}
\label{crossings}
\end{figure}

Let $\Pi$ be a positroid cell in $\Gr_{\geq 0}(k,n)$ with bounded affine permutation $f$, and let $V$ be a matrix representing a point in $\Pi$.  Let $v_1,\ldots,v_n$ be the columns of $G$.  We extend this periodically to a sequence of vectors in $v_i \in \mathbb{C}^k$ by setting $v_i = v_j$ if $i \equiv j \pmod{n}$.  The following lemma, which is implicit in \cite{KLS13}, gives a characterization of $f$ in terms of the $v_i$.

\begin{lem}
With the notation above, $f(i)$ is the smallest $r \geq i$ with
\[v_i \in \Span(v_{i+1},v_{i+2},\ldots,v_r).\]
Similarly $f^{-1}(i)$ is the largest $r \leq i$ such that 
\[v_i \in \Span(v_r,v_{r+1},\ldots,v_{i-1}).\]\end{lem}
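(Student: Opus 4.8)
The plan is to introduce the two maps named in the statement, show that each is a well-defined bounded affine permutation depending only on the matroid of $V$, and then identify them with $f$ and $f^{-1}$ by passing through the Grassmann necklace of $\Pi$. Set $\phi(i)$ to be the smallest $r \geq i$ with $v_i \in \Span(v_{i+1},\ldots,v_r)$ and $\psi(i)$ the largest $r \leq i$ with $v_i \in \Span(v_r,\ldots,v_{i-1})$. Well-definedness and the affine relation are immediate: since $v_{i+n} = v_i$ we always have $v_i \in \Span(v_{i+1},\ldots,v_{i+n})$, so $i \leq \phi(i) \leq i+n$, and $\phi(i+n) = \phi(i)+n$ follows from periodicity, with the analogous facts for $\psi$. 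The key reduction is that the condition $v_i \in \Span(v_{i+1},\ldots,v_r)$ is equivalent to the matroid-rank equality $\mathrm{rank}\{i,i+1,\ldots,r\} = \mathrm{rank}\{i+1,\ldots,r\}$, where the empty span is read as $\{0\}$ so that $\phi(i) = i$ exactly when $v_i = 0$. Hence $\phi$ and $\psi$ depend only on the matroid $\mathcal{J}$ of $V$, and in particular are constant on the positroid cell $\Pi$.

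Next I would check that $\phi \in \Bd(k,n)$. Bijectivity, together with $\psi = \phi^{-1}$, comes from submodularity of the rank function: if $\phi(i) = j$ with $j > i$, then $v_i \in \Span(v_{i+1},\ldots,v_j)$ but $v_i \notin \Span(v_{i+1},\ldots,v_{j-1})$, which forces the coefficient of $v_j$ to be nonzero and hence $v_j \in \Span(v_i,\ldots,v_{j-1})$; a rank count then shows $i$ is the largest such index, giving $\psi(j) = i$ (the degenerate case $j = i$, i.e. $v_i = 0$, yields $\psi(i) = i$ and is consistent). For the type, I would note that $\tfrac1n\sum_{i=1}^n(\phi(i)-i)$ is the average throw-height of the associated juggling pattern, which equals the number of balls, namely $k = \dim V$; this is the defining average-displacement condition for $\Bd(k,n)$.

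The heart of the argument is the identification $\phi = f$. Since the correspondence between $\Bd(k,n)$ and positroid cells is a bijection, it suffices to show that $\phi$ is the bounded affine permutation attached to $\Pi$, which I would do through its Grassmann necklace $(I_1,\ldots,I_n)$. Reading columns in the cyclic order $<_a$, the lex-minimal basis is $I_a = \{\, j : v_j \notin \Span(v_i : i <_a j)\,\}$, and the span condition defining $\phi$ translates term by term into Postnikov's rule carrying a necklace to its decorated permutation. The fixed-point cases must match the color conventions fixed earlier for $f_G$: $v_i = 0$ gives a black fixed point $\phi(i) = i$, while $v_i \notin \Span(v_{i+1},\ldots,v_{i+n-1})$ gives a white fixed point $\phi(i) = i+n$. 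With $\phi = f$ established, the second assertion of the lemma is precisely the identity $\psi = \phi^{-1}$ proved above.

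The main obstacle I anticipate is the bookkeeping in this last step: aligning the cyclic order $<_a$, the periodic $+n$ shifts, and the black/white fixed-point coloring so that the necklace-to-permutation rule reproduces the paper's $f$ on the nose, rather than its inverse or a cyclic shift. Everything else is either immediate from periodicity or a routine consequence of the rank function being a (cyclically) submodular matroid rank.
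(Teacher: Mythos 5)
The paper offers no proof of this lemma at all: it is stated as ``implicit in [KLS13]'' and used as a black box, so there is nothing internal to compare against. Your proposal is a correct reconstruction of the standard argument from that literature, and the overall architecture is sound: define $\phi$ and $\psi$, verify boundedness and periodicity, show $\phi\in\Bd(k,n)$, and identify $\phi=f$ by matching Grassmann necklaces and invoking the bijection between necklaces, bounded affine permutations, and positroid cells. Your anticipated ``bookkeeping obstacle'' does in fact work out: taking representatives $j\in[a,a+n-1]$, the greedy description gives $j\in I_a$ iff $v_j\notin\Span(v_a,\ldots,v_{j-1})$ iff $\psi(j)\leq a-1$; the extreme case $\psi(j)=j-n$ is exactly a white fixed point (in $I_a$ for every $a$), and otherwise $\psi(j)\in[j-n+1,a-1]$ reduces mod $n$ to $\bar f^{-1}(j)>_a j$, which is precisely the paper's $a$-anti-exceedance rule for recovering $I_a$ from $f$.

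Two steps are under-justified as written, though both are repairable. First, the claim that $\tfrac1n\sum_{i=1}^n(\phi(i)-i)=k$ is asserted by appeal to juggling patterns; the actual argument is that $\sum_{i=1}^n(\phi(i)-i)=\sum_{t=1}^n\#\{i\in(t-n,t]:\phi(i)>t\}$, and for each $t$ the columns $\{v_i: i\in(t-n,t],\ \phi(i)>t\}$ form a basis of the column span (each such $v_i$ is outside the span of the later columns in the window), so each summand equals $\operatorname{rank}(V)=k$. Second, your linear-algebra argument establishes $\psi\circ\phi=\mathrm{id}$, but that alone does not make $\psi=\phi^{-1}$; you need the mirror-image composition $\phi\circ\psi=\mathrm{id}$ as well, which follows by the symmetric argument (if $\psi(j)=i<j$, the coefficient of $v_i$ is nonzero, and minimality again rules out smaller witnesses). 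Also note that what you invoke as ``submodularity'' is really just this nonzero-coefficient exchange argument; the label is harmless but inaccurate. With those two points filled in, the proof is complete and is, as far as one can tell, the same argument the paper's citation points to.
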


\subsection{Grassmann and reverse Grassmann necklaces}
 
We now introduce a final pair of indexing sets for positroid cells: Grassmann necklaces and dual Grassmann necklaces.  The combinatorics of Grassmann necklaces are vital to the proof of Theorem \ref{symbrd}, which shows that we can construct a symmetric plabic graph with symmetric weights for any point in the symmetric part of $\Gr_{\geq 0}(k,n)$.

\begin{defn}A \emph{Grassmann necklace} $\mathcal{I} = (I_1,\ldots,I_n)$ of type $(k,n)$ is a sequence of $k$-element subsets of $[n]$ such that the following hold, with indices taken modulo $n$:
\begin{enumerate}
\item If $i \in I_i$ then $I_{i+1} = (I_i \cup \{j\}) -\{i\}$ for some $j \in [n].$
\item If $i \not\in I_i$, then $I_{i+1} = I_i.$
\end{enumerate}
\end{defn}

Postnikov defined a combinatorial bijection between Grassmann necklaces of type $(k,n)$ and bounded affine permutations.
Let $\mathcal{I}$ be a Grassmann necklace of type $(k,n)$.  We construct the bounded affine permutation $f$ corresponding to $\mathcal{I}$ as follows, with indices taken modulo $n$:
\begin{enumerate}
\item If $I_{i+1} = (I_i \cup \{j\}) - \{i\}$ for some $j \neq i$, then $\bar{f}(i) = j$.
\item If $I_{i+1} = I_i$ and $i \not\in I_i$, then $i$ is a black fixed point of $f$.
\item If $I_{i+1} = I_i$ and $i \in I_i$, then $i$ is a white fixed point of $f$.
\end{enumerate}  

Next, we describe how to recover the Grassmann necklace $\mathcal{I}$ from the bounded affine permutation $f$.
We say $i \in [n]$ is an \emph{anti-exceedance} of $f$ if either $\bar{f}^{-1}(i) > i$ or $i$ is a white fixed point.  We say $i$ is an \emph{$a$-anti-exceedance} if we have $\bar{f}^{-1}(i) >_a i$ or $i$ is a white fixed point.  The Grassmann necklace $\mathcal{I}$ corresponding to $f$ is given by setting
\begin{equation}I_a = \{ i \in [n] \mid i \text{ is an $a$-anti-exceedance of $f$}\}.\end{equation}

Let $\mathcal{M}$ be a positroid of type $(k,n)$.  Then $\mathcal{M}$ is a collection of $k$-element subsets of $[n]$.  For each $1 \leq i \leq n$, let $I_i$ be the minimal element of $\mathcal{M}$ with respect to the shifted linear order $\leq_{i}$.  Then $\mathcal{M}$ is a Grassmann necklace of type $(k,n)$.  This procedure gives a bijection between Grassmann necklaces and positroids \cite{Pos06}.  For the inverse bijection, let $\mathcal{I}=(I_1,\ldots,I_n)$ be a Grassmann necklace of type $(k,n)$, and let $\mathcal{M}$ be the set of all $k$-element subsets $J \in {{[n]}\choose{k}}$ such that $I_i \leq_i J$ for all $i \in [n]$.  Then $\mathcal{M}$ is the positroid corresponding to $\mathcal{I}$ \cite{Oh11}.  

Let $\mathcal{M}$ and $\mathcal{I}$ be as above.  If $V$ is a matrix representing some point in the positroid cell $\Pi$ corresponding to $\mathcal{M}$, with columns $v_1,\ldots,v_n$, then the columns indexed by $I_i$ represent the  minimal basis for $\mathbb{C}^k$ among the columns of $V$ with respect the the cyclic order $\leq_i$.  We will use this fact frequently in the proof of Theorem \ref{symbrd}.

We also recall the dual notion, defined in \cite[Section 3.6]{Lam11}.

\begin{defn}
A \emph{dual Grassmann necklace} of type $(k,n)$ is a sequence $\mathcal{J} = (J_1,\ldots,J_n)$ of $k$-element subsets of $n$ such that the following hold, with indices taken modulo $n$:
\begin{enumerate}
\item If $i \in J_{i+1}$, then $J_{i} = (J_{i+1} \cup \{j\}) - \{i\}$ for some $j$
\item If $i \not\in J_{i+1}$, then $J_i = J_{i+1}$.
\end{enumerate}
\end{defn}

We have bijections between reverse Grassmann necklaces, decorated permutations, and positroids, which commute with the bijections given above for Grassmann necklaces.  We describe these briefly below. 

For $\mathcal{J}$ a dual Grassmann necklace, we define the corresponding bounded affine permutation $f$ by setting
\begin{enumerate}
\item If $J_{i} = (J_{i+1} \cup \{j\}) - \{i\}$ for some $j \neq i$, then $\bar{f}^{-1}(i) = j$.
\item If $J_i = J_{i+1}$ and $i \not\in J_{i+1}$, then $i$ is a black fixed point of $f$.
\item If $J_i = J_{i+1}$ and $i \in J_{i+1}$, then $i$ is a white fixed point of $f$
\end{enumerate}
where again indices are taken modulo $n$. 

If $\Pi$ is a positroid cell with positroid $\mathcal{M}$ and dual Grassmann necklace $\mathcal{J}$, then $J_i$ gives the maximal element of $\mathcal{M}$ with respect to the shifted order $\leq_i$.

\subsection{Bridge graphs}

We now describe a way to build up plabic graphs inductively. For more details on this construction, see \cite{Lam11}.  

Let $G$ be a reduced plabic graph with bounded affine permutation $f$ of type $(k,n)$, corresponding to a positroid cell $\Pi$.  If $f(i) > f(i+1)$ for some $i \in [n]$, then $fs_i$ is a bounded affine permutation of type $(k,n)$, with corresponding positroid cell $\Pi^*$.  Moreover, we have $\dim(\Pi^*) = \dim(\Pi) +1$.  We may add an edge, called a \emph{bridge}, between the two edges of $G$ incident  $i$ and $i+1$, to produce a graph $G^*$ corresponding to $\Pi^*$.  The bridge has a white vertex adjacent to $i$ and a black vertex adjacent to $i+1$.  If $G$ has a boundary leaf, or lollipop, at $i$ or $i+1$, that leaf becomes one endpoint of the bridge.  Note that after adding the bridge, we add degree-two vertices as needed to make the graph bipartite, as in Figure \ref{addbridge}.  

For $i \in [n]$, let $x_i(c)$ be the elementary matrix with $1$'s along the diagonal, a nonzero entry $c$ at position $(i,i+1)$, and $0$'s everywhere else.  Let $G$ be as above, and suppose $f(i) < f(i+1)$.  Assign positive real weights to the edges of $G$, and let $M$ be a matrix representing the corresponding point in $\Gr_{\geq 0}(k,n).$
Applying gauge transformations, we can assume the edges adjacent to boundary vertices $i$ and $i+1$ have weight $1$.  Adding a bridge at $(i,i+1)$ with weight $c$ corresponds to multiplying $M$ on the right by $x_i(c)$.  Let $X$ denote the point in $\Gr_{\geq 0}(k,n)$ corresponding to $G$ with its original weighting, and $X^*$ denote the point corresponding to the weighted graph obtained by adding the bridge.  The boundary measurements change as follows:

\begin{equation}\Delta_I(X^*) = \begin{cases}
\Delta_I(X) + c\Delta_{(I\cup \{i\}) - \{i+1\}}(X) & \text{ if $i+1 \in I$ but $i \not\in I$}\\
\Delta_I(X) & \text{otherwise}
\end{cases}.\end{equation}
Note that we are abusing notation slightly, since the $\Delta_I$ represent homogeneous coordinates rather than functions.

\begin{figure}[h]
\centering
\begin{tikzpicture}[scale = 0.7]
\draw (2,4) -- (2,0);
\draw (-1,3.5) -- (0,3);
\draw (-1,2.5) -- (0,3);
\draw (-1,1.5) -- (0,1);
\draw (-1,0.5) -- (0,1);
\draw (0,3) -- (2,3); \wdot{0}{3}; \bdot{2}{3};
\draw (0,1) -- (2,1); \bdot{0}{1}; \wdot{2}{1};
\draw [->] (3,2) [out = 30, in = 150] to (5,2);
\begin{scope}[xshift = 8 cm]
\draw (2,4) -- (2,0);
\draw (-2,3.5) -- (-1,3);
\draw (-2,2.5) -- (-1,3);
\draw (-2,1.5) -- (-1,1);
\draw (-2,0.5) -- (-1,1);
\draw (-1,3) -- (2,3); \wdot{-1}{3}; \bdot{2}{3};
\draw (-1,1) -- (2,1); \bdot{-1}{1}; \wdot{2}{1};
\edge{0.5}{1}{2};
\draw [->] (3,2) [out = 30, in = 150] to (5,2);
\end{scope}
\begin{scope}[xshift = 16 cm]
\draw (2,4) -- (2,0);
\draw (-2,3.5) -- (-1,3);
\draw (-2,2.5) -- (-1,3);
\draw (-2,1.5) -- (-1,1);
\draw (-2,0.5) -- (-1,1);
\draw (-1,3) -- (2,3); \wdot{-1}{3}; \bdot{2}{3};
\draw (-1,1) -- (2,1); \bdot{-1}{1}; \wdot{2}{1};
\edge{0.5}{1}{2}; \bdot{-0.25}{3}; \wdot{-0.25}{1};
\end{scope}
\end{tikzpicture}
\caption{Adding a bridge to a plabic graph.}
\label{addbridge}
\end{figure}
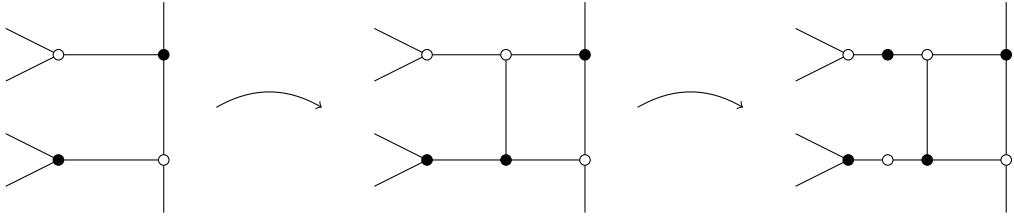

The following proposition, from \cite{Lam13b}, gives a way to ``undo" the operation of adding a bridge, at least on the level of matrix representatives.

\begin{prop}\label{unbridge}
Let $X \in \Gr_{\geq 0}(k,n)$, and suppose $X$ is contained in the positroid cell $\Pi$ with bounded affine permutation $f$. 
Suppose $i < f(i) < f(i+1) < i+n+1$.  Then
\begin{equation}c  = \frac{\Delta_{I_{i+1}}(X)}{\Delta_{(I_{i+1}\cup \{i\})-\{i+1\}}(X)}\end{equation}
is positive and well defined, and $X^* = X \cdot x_i(-c)$ is in $\Pi^*$, where $\Pi^* \subset \Gr_{\geq 0}(k,n)$ is the positroid with bounded affine permutation $fs_i$ and $\dim(\Pi^*) = \dim(\Pi)-1$.
\end{prop}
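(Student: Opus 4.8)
The plan is to prove both assertions at once by producing the factorization $X = Y\cdot x_i(c_0)$ with $Y\in\Pi^*$ and $c_0>0$, and then identifying $c_0$ with the stated ratio; the factorization is supplied by the (already established) forward bridge construction, and the identification is forced by the displayed boundary--measurement formula for adding a bridge. Throughout, let $v_1,\dots,v_n$ denote the columns of $X$ and set $J := (I_{i+1}\cup\{i\})-\{i+1\}$.

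First I would record the combinatorial content of the hypothesis $i<f(i)<f(i+1)<i+n+1$. Since $f(i)>i$ and $f(i)<f(i+1)$ we get $f(i+1)>i+1$, and since $f(i+1)<i+n+1$ the index $i+1$ is neither a black nor a white fixed point; hence $v_{i+1}\neq 0$, so $i+1$, being the $\le_{i+1}$-minimal index, lies in $I_{i+1}$. On the other hand $f(i)\le i+n-1$, so by the span characterization of $f$ stated above, $v_i\in\Span(v_{i+1},\dots,v_{f(i)})\subseteq\Span(v_{i+1},\dots,v_{i-1})$, and therefore the greedy $\le_{i+1}$-construction of $I_{i+1}$ never selects $i$; that is, $i\notin I_{i+1}$. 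Consequently $J$ is a genuine $k$-subset obtained from $I_{i+1}$ by a single exchange, and the numerator and denominator of $c$ are Pl\"ucker coordinates indexed by honest $k$-subsets.

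Next I would apply the forward bridge construction to $g:=fs_i$. Because $f(i)<f(i+1)$ we have $g(i)=f(i+1)>f(i)=g(i+1)$, so adding a bridge at $(i,i+1)$ to a reduced graph for $g$ produces a reduced graph for $f$, with $\dim\Pi^*=\dim\Pi-1$ where $\Pi^*=\Pi_g$. Surjectivity of the boundary measurement map onto $\Pi$, together with the fact that the bridge weight enters as an independent positive parameter (after gauge-fixing the legs at $i$ and $i+1$ to weight $1$), lets me write any $X\in\Pi$ as $X=Y\cdot x_i(c_0)$ with $Y\in\Pi^*$ and $c_0>0$; in particular $X\cdot x_i(-c_0)=Y\in\Pi^*$. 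To pin down $c_0$, I feed this factorization into the displayed formula for $\Delta_I(X^*)$ with weight $c_0$: since $i+1\in I_{i+1}$ and $i\notin I_{i+1}$, the coordinate $I_{i+1}$ is in the changed case, giving $\Delta_{I_{i+1}}(X)=\Delta_{I_{i+1}}(Y)+c_0\,\Delta_J(Y)$, whereas $J$ (which contains $i$ but not $i+1$) is in the unchanged case, giving $\Delta_J(X)=\Delta_J(Y)$. The crux is then the vanishing $\Delta_{I_{i+1}}(Y)=0$, i.e.\ $I_{i+1}\notin\mathcal{M}^*$, where $\mathcal{M}^*$ is the positroid of $g$. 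I would prove this by comparing the Grassmann necklaces of $f$ and $g$ at index $i+1$: from $g^{-1}=s_i f^{-1}$ the only indices whose $(i+1)$-anti-exceedance status can change are $f(i)$ and $f(i+1)$, and a short case analysis shows that $I^*_{i+1}$ is obtained from $I_{i+1}$ by replacing $f(i)$ with the strictly larger $f(i+1)$; hence $I_{i+1}<_{i+1}I^*_{i+1}$. Since $I^*_{i+1}$ is the $\le_{i+1}$-minimal basis of $\mathcal{M}^*$, the set $I_{i+1}$ cannot belong to $\mathcal{M}^*$, so $\Delta_{I_{i+1}}(Y)=0$; the same bookkeeping gives $J\in\mathcal{M}^*$, hence $\Delta_J(Y)=\Delta_J(X)>0$. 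Combining, $\Delta_{I_{i+1}}(X)=c_0\,\Delta_J(X)$ with $\Delta_J(X)>0$ forces $c_0=\Delta_{I_{i+1}}(X)/\Delta_J(X)=c$, so $c$ is positive and well defined and $X^*=X\cdot x_i(-c)=Y\in\Pi^*$.

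The main obstacle is the Grassmann-necklace comparison of the last paragraph, namely establishing $I_{i+1}\notin\mathcal{M}^*$ and $J\in\mathcal{M}^*$. The generic replacement ``$f(i)\leftrightarrow f(i+1)$'' degenerates in the two boundary cases $f(i)=i+1$ (where $i+1$ becomes a black fixed point of $g$ and simply drops out of $I^*_{i+1}$) and $f(i+1)=i+n$ (where the largest slot behaves exceptionally), and these degenerate cases must be verified separately to confirm that $I_{i+1}<_{i+1}I^*_{i+1}$, and hence the vanishing $\Delta_{I_{i+1}}(Y)=0$, still holds.
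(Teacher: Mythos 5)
The paper offers no proof of Proposition \ref{unbridge} at all: it quotes the result from \cite{Lam13b}, so there is no internal argument to compare against, and your proof should be measured against Lam's original one. Lam's proof is a direct linear-algebra verification: using the span characterization of $f$ in terms of the columns $v_j$ (the lemma the paper records from \cite{KLS13}), one checks that the column operation $M\mapsto M\cdot x_i(-c)$ produces a point whose bounded affine permutation is exactly $fs_i$. Your route is genuinely different: you reverse the forward bridge construction. Since $g=fs_i$ is bounded (immediate from $i<f(i)<f(i+1)<i+n+1$) and $g(i)=f(i+1)>f(i)=g(i+1)$, a reduced graph realizing $\Pi$ is obtained from one for $g$ by adding a bridge at $(i,i+1)$; surjectivity of the boundary measurement map plus gauge-fixing the legs then yields the factorization $X=Y\cdot x_i(c_0)$ with $Y\in\Pi^*$ and $c_0>0$, and the Pl\"ucker update formula together with the vanishing $\Delta_{I_{i+1}}(Y)=0$ pins down $c_0=c$. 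The necklace exchange you invoke, $I^*_{i+1}=(I_{i+1}\setminus\{\bar f(i)\})\cup\{\bar f(i+1)\}$ with $\bar f(i)<_{i+1}\bar f(i+1)$, is correct, and the two degenerate cases you flag are not actually exceptional: using the description of $I_a$ as the values of the chords crossing the gap between $a-1$ and $a$, that is $I_a=\{\bar f(j)\bmod n : j<a\le f(j)\}$, the hypothesis guarantees that exactly one crossing chord changes (the chord $i\to f(i)$ becomes $i\to f(i+1)$), uniformly including $f(i)=i+1$ and $f(i+1)=i+n$. Your opening verification that $i+1\in I_{i+1}$ and $i\notin I_{i+1}$ is likewise sound. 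What your approach buys is that it stays entirely inside the toolkit this paper sets up (the update formula, surjectivity, and necklace $\le_a$-minimality, which the paper also exploits heavily in Theorem \ref{symbrd}); Lam's argument is leaner but rests on the span lemma instead.

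The one soft spot is the assertion that ``the same bookkeeping gives $J\in\cM^*$.'' A necklace comparison at a single index yields only a \emph{necessary} condition for membership in a positroid; to conclude $J\in\cM^*$ combinatorially you would need Oh's criterion $I^*_a\le_a J$ for all $a\in[n]$, which you have not checked. Fortunately the gap closes itself from your own displayed identity: $I_{i+1}$ is a necklace term of $\Pi$ and $X\in\Pi$, so $\Delta_{I_{i+1}}(X)>0$; since $\Delta_{I_{i+1}}(X)=\Delta_{I_{i+1}}(Y)+c_0\,\Delta_J(Y)=c_0\,\Delta_J(Y)$, this forces $\Delta_J(Y)=\Delta_J(X)>0$, which is all you actually use. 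With that one substitution, your argument is complete and correct.
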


Note  that we have 
\begin{equation}i < f(i) < f(i+1) < i+n+1\end{equation} if and only if the chords in the chord diagram for $f$ which start at $i$ and $i+1$ respectively form a crossing as in Figure \ref{crossings}.  If this occurs, we say that $f$ \emph{has a bridge} at $(i,i+1)$.

\section{Characterizing symmetric plabic graphs}

Let $G$ be a plabic graph with $2n$ boundary vertices, corresponding to a positroid cell $\Pi$.  Label the boundary vertices of $G$ with the numbers $1,2,\ldots,2n$ in clockwise order, and fix a distinguished diameter $d$ of $G$ with one end between vertices $2n$ and $1$, and the other between vertices $n$ and $n+1$.  For each $i \in 2n$, let $i' = 2n+1-i$.

For $I=\{i_1,i_2,\ldots,i_k\}$, we define $R(I)=[2n] \backslash\{i' \mid i \in I\}.$
Let $G$ be a symmetric plabic graph, and let $I \subseteq [2n]$.  Reflection through $d$ gives a bijection between the set of almost perfect matchings $P$ of $G$ with $\partial(P)=I$ and the set of almost perfect matchings $P'$ of $G$ with $\partial(P')=R(I)$. One immediate consequence is that if $I = \partial(P)$ for some matching $P$ of $G$ then $|I|=n$, so $\Pi$ lies in $Gr_{\geq 0}(n,2n)$. 
 
\begin{figure}
\centering
\begin{tikzpicture}
\draw [gray!30] (0,3.5) -- (0,-3.5);
\draw (0,0) circle (3);
\draw ({2*cos(60)},{2*sin(60)}) -- ({-2*cos(60)},{2*sin(60)})  -- ({-2*cos(60)},{-2*sin(60)})  -- ({2*cos(60)},{-2*sin(60)}) --  ({2*cos(60)},{2*sin(60)});
\draw ({2*cos(60)},{2*sin(60)}) -- ({3*cos(60)},{3*sin(60)});
\draw ({-2*cos(60)},{2*sin(60)}) -- ({-3*cos(60)},{3*sin(60)});
\draw ({-2*cos(60)},{-2*sin(60)}) -- ({-3*cos(60)},{-3*sin(60)});
\draw ({2*cos(60)},{-2*sin(60)}) -- ({3*cos(60)},{-3*sin(60)});
\draw ({3*cos(30)},{3*sin(30)}) -- (2,0) -- ({3*cos(30)},{-3*sin(30)});
\draw ({-3*cos(30)},{3*sin(30)}) -- (-2,0) -- ({-3*cos(30)},{-3*sin(30)});
\bdot{{2*cos(60)}}{{2*sin(60)}};\wdot{{-2*cos(60)}}{{2*sin(60)}};\bdot{{-2*cos(60)}}{{-2*sin(60)}};\wdot{{2*cos(60)}}{{-2*sin(60)}};
\wdot{{3*cos(60)}}{{3*sin(60)}};\bdot{{3*cos(30)}}{{3*sin(30)}};\wdot{2}{0};\bdot{{3*cos(30)}}{{-3*sin(30)}};\bdot{{3*cos(60)}}{{-3*sin(60)}};
\bdot{{-3*cos(60)}}{{3*sin(60)}};\wdot{{-3*cos(30)}}{{3*sin(30)}};\bdot{-2}{0};\wdot{{-3*cos(30)}}{{-3*sin(30)}};\wdot{{-3*cos(60)}}{{-3*sin(60)}};
\node [above right] at ({3*cos(60)},{3*sin(60)}) {1};
\node [above right] at ({3*cos(30)},{3*sin(30)}) {2};
\node [below right] at ({3*cos(-30)},{3*sin(-30)}) {3};
\node [below right] at ({3*cos(-60)},{3*sin(-60)}) {4};
\node [below left] at ({3*cos(240)},{3*sin(240)}) {5};
\node [below left] at ({3*cos(210)},{3*sin(210)}) { 6};
\node [above left] at ({3*cos(150)},{3*sin(150)}) {7};
\node [above left] at ({3*cos(120)},{3*sin(120)}) {8};
\node [right] at (0,2) {4};
\node [right] at (0,-2) {3};
\node [right] at (1,0) {7};
\node [left] at (-1,0) {7};
\end{tikzpicture}
\caption{A symmetric weighting of a symmetric plabic graph. The distinguished diameter $d$ is shown in gray.  Unlabeled edges have weight $1$.}
\label{graph}
\end{figure}
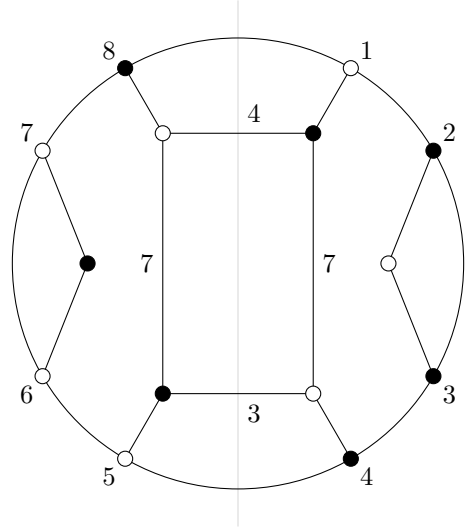

\begin{thm}
\label{graphsLagrangian}
Let $\Pi$ be a positroid cell in $Gr_{\geq 0}(n,2n)$ with positroid $\cM$ and bounded affine permutation $f$. Let $\mathcal{I} = (I_1,\ldots,I_{2n})$ be the Grassmann necklace of $\Pi$, and let $\mathcal{J} = (J_{1},\ldots,J_{2n})$ be the dual Grassmann necklace.
\label{positroidiwthsym2}
The following are equivalent:
\begin{enumerate}[(i)]
\item\label{graph1} $\Pi$ can be represented by a symmetric plabic graph.
\item\label{matroid} $I\in\cM$ if and only if $R(I)\in\cM$.
\item\label{perm} For $a \in [2n]$, if $f(a) = b$, then $f(2n+1-a)=4n+1-b$.
\item\label{necklace} $R(I_i)=I_{i'+1}$ for all $i\in [2n]$ with indices taken modulo $2n$.
\item\label{dualnecklace} $R(J_i)=J_{i'+1}$ for all $i\in [2n]$ with indices taken modulo $2n$.
\end{enumerate}
\end{thm}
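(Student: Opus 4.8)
The plan is to prove a web of implications linking the geometric condition in part~\ref{graph1} to the four combinatorial conditions, tying the combinatorial conditions together through the standard bijections among positroids, Grassmann necklaces, dual Grassmann necklaces, and bounded affine permutations. Concretely I would establish part~\ref{graph1} $\Rightarrow$ part~\ref{matroid}, then part~\ref{matroid} $\Leftrightarrow$ part~\ref{necklace} $\Leftrightarrow$ part~\ref{dualnecklace}, then part~\ref{necklace} $\Leftrightarrow$ part~\ref{perm}, and finally close the loop with part~\ref{perm} $\Rightarrow$ part~\ref{graph1}. The first implication is essentially already in hand: reflection through $d$ gives a bijection between matchings $P$ with $\partial(P)=I$ and matchings $P'$ with $\partial(P')=R(I)$, so the Pl\"ucker coordinates $\Delta_I$ and $\Delta_{R(I)}$ vanish simultaneously, which is exactly part~\ref{matroid}.

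For the three combinatorial equivalences I would isolate a single lemma about the operator $R$. Writing $R = c \circ \iota$, where $\iota(j)=2n+1-j$ is the reflection and $c(I)=[2n]\setminus I$ is complementation, both $\iota$ and $c$ reverse each shifted order; I would check that $\iota$ sends $\leq_a$ to $\leq_{a'+1}$ and that $c$ reverses $\leq_b$ for the same index $b$, so that the composite $R$ is an order \emph{isomorphism} from $(\binom{[2n]}{n},\leq_a)$ to $(\binom{[2n]}{n},\leq_{a'+1})$. Granting this, part~\ref{matroid} says $R(\cM)=\cM$, and applying $R$ to the minimal (respectively maximal) element of $\cM$ with respect to $\leq_a$ immediately yields $R(I_a)=I_{a'+1}$ and $R(J_a)=J_{a'+1}$, giving part~\ref{matroid} $\Leftrightarrow$ part~\ref{necklace} $\Leftrightarrow$ part~\ref{dualnecklace} in one stroke. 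The equivalence part~\ref{necklace} $\Leftrightarrow$ part~\ref{perm} I would obtain by pushing the symmetry through the explicit necklace-to-permutation rules: the generic relation $I_{a+1}=(I_a\cup\{j\})-\{a\}$ transforms, under $R$ and the index shift $a\mapsto a'+1$, into the reflected relation, and a short case check shows that the color-swap of fixed points (a black fixed point at $a$ becoming a white fixed point at $a'$) is precisely the content of part~\ref{perm}, namely $f(a)=b \Rightarrow f(a')=4n+1-b$.

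Before the construction I would record the trip interpretation of part~\ref{perm}: since reflection reverses the sense of turning while simultaneously reversing colors, the image of a trip is again a trip, so a symmetric graph satisfies $\bar f_G(a')=(\bar f_G(a))'$, and tracking the bound $a\le f(a)\le a+2n$ pins down the affine lift to be exactly $4n+1-b$. This gives an independent part~\ref{graph1} $\Rightarrow$ part~\ref{perm} and, more usefully, tells me what to aim for in the converse. To prove part~\ref{perm} $\Rightarrow$ part~\ref{graph1} I would build a symmetric reduced graph by a symmetric bridge decomposition, inducting on $\dim\Pi$ via Proposition~\ref{unbridge}. If $f$ has only fixed points the graph is a reflection-symmetric arrangement of lollipops, where part~\ref{perm} forces a black lollipop at $a$ to pair with a white lollipop at $a'$; this is the base case. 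Otherwise $f$ has a bridge at some $(i,i+1)$, and using part~\ref{perm} I would show that it also has the reflected bridge at $(i'-1,i')$, remove this reflected pair (passing to $f s_i s_{i'-1}$, which again satisfies part~\ref{perm}), apply induction to obtain a symmetric graph, and re-attach the two bridges as mirror images of one another.

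The main obstacle is precisely this construction step, and within it the self-reflected \emph{central bridge} at $(n,n+1)$, which occurs when $n<f(n)\le 2n$ and is its own mirror image. I would need to verify that a single such bridge crossing $d$ respects the color-reversing symmetry (the convention places a white vertex at $n$ and a black vertex at $n+1$, which swap correctly under reflection), that removing it preserves part~\ref{perm}, and that the nearly-central cases in which the reflected bridges share a boundary index (for example $i=n-1$, where $s_{n-1}$ and $s_{n}$ do not commute) can be ordered so that the transpositions behave correctly and the intermediate graphs remain reduced and genuinely symmetric. Maintaining reducedness, the correct affine lift, and exact reflection symmetry simultaneously throughout the induction, rather than any single computation, is where the real work lies; the combinatorial equivalences, by contrast, collapse to the one order-isomorphism property of $R$ together with routine bookkeeping.
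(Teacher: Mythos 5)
Your proposal is correct in substance, and for the heart of the theorem--the equivalences among (ii), (iii), (iv), (v)--it takes a genuinely different route from the paper. The paper proves (ii) $\Leftrightarrow$ (iii) geometrically: it takes an arbitrary reduced graph $H$ for $\Pi$, reflects it through $d$ and reverses colors to get a reduced graph $H'$, and observes that the positroid of $H'$ is $\{I \mid R(I) \in \cM\}$ while its trip permutation is the reflected permutation, so (ii) and (iii) are each equivalent to $\Pi' = \Pi$; it then proves (iii) $\Rightarrow$ (iv), (v) via anti-exceedances in the chord diagram, and (iv), (v) $\Rightarrow$ (iii) by the necklace recurrences. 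You instead isolate the single combinatorial lemma that $R = c \circ \iota$ is an order isomorphism from $\bigl(\binom{[2n]}{n}, \leq_a\bigr)$ to $\bigl(\binom{[2n]}{n}, \leq_{a'+1}\bigr)$ (which is true: $\iota$ reverses $\leq_a$ onto $\leq_{a'+1}$, and complementation reverses each Gale order), and then read off (ii) $\Rightarrow$ (iv) and (ii) $\Rightarrow$ (v) from the characterizations $I_a = \min_{\leq_a}\cM$, $J_a = \max_{\leq_a}\cM$. This is cleaner and more uniform than the paper's treatment--it handles the necklace and dual necklace symmetrically and avoids the auxiliary graph $H'$ entirely--though note that your ``one stroke'' glosses the converse directions: (iv) $\Rightarrow$ (ii) is not formal from the lemma alone but needs the fact that the necklace determines the positroid (Oh's characterization $\cM = \{J \mid I_i \leq_i J \ \forall i\}$, which the paper cites, and through which your lemma does give $R(\cM) = \cM$ cleanly). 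Your (i) $\Rightarrow$ (ii) and (iii) $\Rightarrow$ (i) coincide with the paper's arguments: the matching-reflection bijection, and the inductive symmetric bridge construction with the central bridge at $(n,n+1)$ treated separately, with $g = fs_n$ or $g = fs_{i'-1}s_i$.

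One point you flag as a danger is in fact vacuous: for $i \neq n$ the reflected pair $(i,i+1)$ and $(i'-1,i')$ can never share a boundary index, since overlap would force $2i = 2n$ or $2i = 2n \pm 1$; in particular at $i = n-1$ the reflected bridge sits at $(n+1, n+2)$, not $(n, n+1)$, and $s_i$, $s_{i'-1}$ always commute when $i \neq n$ because $|i - (i'-1)| = |2i - 2n| \geq 2$. So the ``nearly-central'' case you worry about does not arise, and the only genuinely special case is the central bridge itself--exactly the case dichotomy the paper uses. Your verification that a bridge at $(i,i+1)$ forces the reflected bridge, and that the central bridge exists precisely when $n < f(n) \leq 2n$ with the color convention compatible under reflection, is correct and matches what the paper's proof implicitly relies on.
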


\begin{proof}

The above discussion shows that \eqref{graph1} implies \eqref{matroid}.

Next, we show that \eqref{matroid} and \eqref{perm} are equivalent.  Consider a reduced plabic graph $H$ corresponding to $\Pi$, not necessarily symmetric, but with distinguished diameter $d$ as above.  Reflect $H$ about $d$, and reverse the colors of all vertices.  Let $H'$ be the resulting plabic graph.  We note that $H'$ is reduced.  Indeed, suppose $H$ has a trip from boundary vertex $a$ to boundary vertex $t$.  Then this trip corresponds, under reflection though $d$ and reversing colors of vertices, to a trip from $a'$ to $t'$ in $H'$.  Hence, $H'$ satisfies the criterion for being reduced, which is phrased entirely in terms of forbidden intersections between trips.

Let $\bar{f}$ be the trip permutation of $H$, and $\bar{f}'$ the decorated permutation of $H'$.  Then $\bar{f}(a) = t$ implies $\bar{f}'(a') = t'$.  Note also that $a < t$ if and only if $a' > t'$.  Moreover, $a$ is a black fixed point of $f$ if and only if $a'$ is a white fixed point of $f'$, and vice versa.  It follows that $f(a) = b$ implies 
\begin{equation}f'(2n+1-a) = 4n+1-b.\end{equation}

The reduced plabic graph $H'$  corresponds to some positroid cell $\Pi'$, with positroid $\cM'$.  It is clear that $\cM' = \{I \mid R(I) \in \cM\}$.  Hence $\cM' = \cM$ if and only if $\cM$ satisfies condition \eqref{matroid} above, while $\bar{f} = f'$ if and only if $f$ satisfies condition \eqref{perm}.  But the statement $\cM' = \cM$ and $\bar{f} = f'$ are both, in turn, equivalent to $\Pi = \Pi'$.  Hence \eqref{perm} and \eqref{matroid} are equivalent as desired.  

Next, we show \eqref{perm} implies \eqref{graph1}.  Suppose $f(a) = b$ implies $f(2n+1-a) = 4n+1-b$.  
We claim that we can construct a symmetric plabic graph with bounded affine permutation $f$, using the bridge graph construction given above.
We proceed by induction on the dimension of the positroid $\Pi$ corresponding to $G$.  Note first that if $a$ is a black fixed point of $f$, then \eqref{perm} implies that $a'$ is a white fixed point, and vice versa.  We may therefore assume that $f$ has no fixed points.  Hence there is some pair $(i,i+1)$ with $1 \leq i \leq 2n$ such that the bounded affine permutation $f$ has a bridge at $(i,i+1)$.  If $i = n$, let $g = fs_i$.  Otherwise, let $g = fs_{i'-1}s_i$.  In each case, $g$ is a bounded affine permutation which satisfies \eqref{perm}.  By induction, we can build a symmetric plabic graph corresponding to $g$.  Adding a bridge at $(n,n+1)$ or a pair of bridges at $(i,i+1)$ and $(i'-1,i')$, respectively, we obtain a symmetric plabic graph for $f$.

Next, we show that \eqref{perm} implies \eqref{necklace} and \eqref{dualnecklace}.  Indeed, if $\eqref{perm}$ holds, then the chord diagram for $f$ has an arrow from $a$ to $t$ if and only if it has an arrow from $a'$ to $t'$.  Hence for each $i \in [2n]$, we have $\bar{f}^{-1}(a) >_i a$ if and only if $\bar{f}^{-1}(a') <_{(i'+1)}a',$ and so $I_{i} = R(I_{i'+1})$.  The argument for the dual Grassmann necklace is analogous.  

Finally, we show that \eqref{necklace} and \eqref{dualnecklace} each imply \eqref{perm}.  
Suppose \eqref{necklace} holds.  
Note first that for $a \in [2n]$, we have $a \in I_i$ if and only if $a' \notin I_{i'+1}$.  It follows in particular that if $a \in I_i$ for all $i \in [2n]$ (that is, $a$ corresponds to a ``white'' fixed point of $f$) then $a' \not\in I_i$ for all $i$, and $a'$ corresponds to a black fixed point.  Hence $f(a') = 4n+1-f(a)$ as desired.  The analogous argument holds if $a$ is a black fixed point.  

Suppose $a$ is not a fixed point of $f$.  
Then we have
\begin{align} I_a &= (I_{a+1} \cup \{a\}) - \{\bar{f}(a)\}\\
I_{a'+1} & = (I_{a'} \cup \{\bar{f}(a')\}) - \{a'\}
\end{align}
Applying $R$ to the first line above, we have
\begin{equation}R(I_a) = (R(I_{a+1}) \cup \{(\bar{f}(a))'\}) - \{a'\}\end{equation}
By \eqref{necklace}, this implies
\begin{equation}I_{a'+1} = (I_{a'} \cup \{(\bar{f}(a))'\}) -\{a'\}\end{equation}
Comparing this with our previous expression for $I_{a'+1}$ above, we have
\begin{equation}\bar{f}(a')=(\bar{f}(a))'\end{equation}
It follows that $f$ is a bounded affine permutation of type $(n,2n)$ which satisfies $\eqref{perm}$.  Again, note that we could make an analogous argument using the dual Grassmann necklace.  This completes the proof.
\end{proof}

\section{Network Realizations of Symmetric Points}

Let $G$ be a symmetric plabic graph.  Suppose we assign weights to the edges of $G$ such that each edge $(u,v)$ has the same weight as its reflection $(r(u),r(v))$ about the distinguished diameter $d$.  For $I \in {{[n]}\choose{k}}$, reflection across $d$ then gives a weight-preserving bijection between almost perfect matchings $P$ with $\partial(P) = I$ and almost perfect matchings $P'$ with $\partial(P')=R(I)$.

\begin{defn}A point $X \in \Gr_{\geq 0}(n,2n)$ is a \emph{symmetric point} if  $\Delta_{I}(P)=\Delta_{R(I)}(P)$ for all $I$.  The subvariety of symmetric points in $\Gr_{\geq 0}(n,2n)$  is the \emph{symmetric part} of $\Gr_{\geq 0}(n,2n)$.  Similarly, for $\Pi$ a positroid cell, the \emph{symmetric part} of $\Pi$ is the intersection of $\Pi$ with the symmetric part of $\Gr_{\geq 0}(n,2n)$.  
\end{defn}

Clearly, the image of every symmetric weighting of a symmetric plabic graph is a symmetric point.  We show the converse.

\begin{thm}\label{allpt}Let $X \in \Gr_{\geq 0}(n,2n)$ be a symmetric point.  Then $X$ may be represented by a symmetric plabic graph with symmetric weights.
\end{thm}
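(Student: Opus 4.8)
The plan is to argue by induction on $\dim \Pi$, where $\Pi$ is the positroid cell containing $X$, peeling off bridges via Proposition~\ref{unbridge} while keeping track of symmetry. First I observe that a symmetric point has $\Delta_I(X) = \Delta_{R(I)}(X)$, so its matroid satisfies condition~\eqref{matroid} of Theorem~\ref{graphsLagrangian}; hence $\Pi$ admits a symmetric plabic graph and its bounded affine permutation $f$ satisfies~\eqref{perm}. To propagate symmetry through the induction I would introduce the involution $\sigma$ on $\Gr_{\geq 0}(n,2n)$ determined by $\Delta_I(\sigma(X)) = \Delta_{R(I)}(X)$, so that ``$X$ is symmetric'' becomes ``$\sigma(X) = X$''.

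The key lemma I would establish is a compatibility between $\sigma$ and the bridge-adding operation $X \mapsto X\cdot x_i(c)$, namely
\[ \sigma\big(X\cdot x_i(c)\big) = \sigma(X)\cdot x_{i'-1}(c) \quad (i \neq n), \qquad \sigma\big(X\cdot x_n(c)\big) = \sigma(X)\cdot x_n(c). \]
This follows by a direct computation from the Pl\"ucker transformation rule for adding a bridge together with the definition of $R$: the coordinates of $X\cdot x_i(c)$ that are altered are exactly those $I$ with $i+1 \in I$ and $i \notin I$, and applying $R$ carries these to the coordinates altered by adding a bridge at $(i'-1,i')$, with the relevant minors matching. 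In words, reflecting a bridge at $(i,i+1)$ produces a bridge at $(i'-1,i')$ of the same weight, while the bridge at $(n,n+1)$ is self-reflective.

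For the base case $\dim \Pi = 0$ the permutation $f$ consists entirely of fixed points, so the unique plabic graph is a collection of lollipops whose colors are placed symmetrically by~\eqref{perm}, and there are no edge weights to constrain. For the inductive step, since $\dim \Pi > 0$ there is a bridge at some $(i,i+1)$, and by~\eqref{perm} a reflected bridge at $(i'-1,i')$ (which coincides with $(i,i+1)$ when $i = n$). Using Proposition~\ref{unbridge} I would peel both bridges: for $i \neq n$ the matrices $x_i$ and $x_{i'-1}$ have disjoint, non-adjacent supports and so commute, and the second bridge survives the first peeling because $i,i+1 \notin \{i'-1,i'\}$. This writes $X = Y\cdot x_i(a)\cdot x_{i'-1}(b)$ (or $X = Y\cdot x_n(a)$) with $Y$ in the cell $\Pi^{**}$ of $f s_i s_{i'-1}$ (resp.\ $f s_n$), where $a,b>0$ and the triple $(a,b,Y)$ is uniquely determined, the peeling in Proposition~\ref{unbridge} being deterministic. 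Applying $\sigma$ and using the compatibility lemma, commutativity, and $\sigma(X)=X$, I obtain $X = \sigma(Y)\cdot x_i(b)\cdot x_{i'-1}(a)$ with $\sigma(Y) \in \sigma(\Pi^{**}) = \Pi^{**}$, the last equality because $f s_i s_{i'-1}$ again satisfies~\eqref{perm}. Uniqueness then forces $a = b =: c$ and $\sigma(Y) = Y$, so $Y$ is a symmetric point of the lower-dimensional cell $\Pi^{**}$. By induction $Y = \partial_H(\omega)$ for a symmetric plabic graph $H$ with symmetric weights $\omega$, and adding the reflected pair of bridges (or the single self-reflective bridge) with common weight $c$ yields a symmetric plabic graph representing $X$ with symmetric weights.

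I expect the main obstacle to be the compatibility lemma and, relatedly, the uniqueness of the bridge coordinates $(a,b,Y)$: the whole argument hinges on the fact that reflecting a bridge at $(i,i+1)$ gives a bridge at $(i'-1,i')$ of equal weight and that these two operations commute, so that the symmetry $\sigma(X)=X$ is what forces $a = b$ and the symmetry of $Y$. Verifying the lemma is the one genuinely computational point; once it is in hand, the induction closes cleanly.
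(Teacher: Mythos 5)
Your proposal is correct in outline, but it is a genuinely different route from the paper's proof of Theorem \ref{allpt} --- in fact you are proving the stronger Theorem \ref{symbrd}. The paper's own proof of Theorem \ref{allpt} is a short, soft gauge-fixing argument with no induction: since the matroid of $X$ satisfies condition \eqref{matroid} of Theorem \ref{graphsLagrangian}, the point $X$ is realized by \emph{some} weighting $\omega$ of a symmetric plabic graph $G$; one then chooses a symmetric collection $F$ of edges forming disjoint trees, each meeting the boundary exactly once and covering every vertex, gauge-fixes the edges of $F$ to weight $1$, and observes that the resulting weighting $\nu$ and its reflection $\nu'$ both map to $X$, hence differ by gauge transformations fixing $F$, which must be trivial; so $\nu = \nu'$ is symmetric. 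That argument buys brevity; yours buys an explicit symmetric bridge-graph parametrization. Within the bridge approach, your organization via the involution $\sigma$ is cleaner than the paper's: your compatibility identity $\sigma(X\cdot x_i(c)) = \sigma(X)\cdot x_{i'-1}(c)$ does check out (one verifies $i+1 \in R(I), i \notin R(I)$ iff $i' \in I, i'-1 \notin I$, and $(R(I)\cup\{i\})-\{i+1\} = R\bigl((I\cup\{i'-1\})-\{i'\}\bigr)$), and together with uniqueness of the peeled data it replaces the paper's hands-on verification in Theorem \ref{symbrd} that the second bridge constant $c^*$ equals the first $c$, which there requires a case analysis on $\bar f(i'-1)$ and column-span arguments with Grassmann necklaces. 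Two of your supporting claims deserve explicit justification: that $\sigma$ is a well-defined involution of $\Gr_{\geq 0}(n,2n)$ (this follows from reflecting weighted plabic graphs, as in the proof of Theorem \ref{graphsLagrangian}), and the uniqueness of $(a,b,Y)$, which is true but not contained in Proposition \ref{unbridge} as stated: if $X = Y\cdot x_i(a)$ with $Y$ in the cell of $fs_i$, then $\Delta_{I_{i+1}}$ vanishes on that cell (its necklace element at position $i+1$ is strictly $\leq_{i+1}$-greater than $I_{i+1}$), and $\Delta_{I_{i+1}}(X\cdot x_i(-t))$ is linear in $t$, so $t = a$ is its unique zero and coincides with the constant of Proposition \ref{unbridge}.

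There is one concrete gap: the step ``since $\dim\Pi > 0$ there is a bridge at some $(i,i+1)$'' is false when $f$ has fixed points. A bridge requires $i < f(i) < f(i+1) < i+2n+1$, so no bridge can involve a fixed point ($i < f(i)$ fails at a black fixed point, $f(i+1) < i+2n+1$ fails at a white one), and fixed points can block every adjacent pair: for $2n = 4$, the bounded affine permutation with $f(1)=3$, $f(2)=2$, $f(3)=5$, $f(4)=8$ has a one-dimensional cell and no bridge at any pair, including the wraparound. You must first strip the lollipops --- which by \eqref{perm} come in symmetric pairs, a white fixed point at $i$ matching a black one at $i'$ --- exactly as the paper does with the sentence ``we may therefore reduce to the case where $f$ has no fixed points.'' Once fixed points are removed, a descent argument shows $f(1) > f(2) > \cdots > f(2n)$ is impossible, so a bridge exists at some $(i,i+1)$ with $1 \leq i \leq 2n-1$, and your induction closes as you describe.
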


\begin{proof}Since $X$ is a symmetric point, the positroid of $X$ satisfies condition (\ref{matroid}) from Theorem \ref{graphsLagrangian}.  Hence $X$ may be represented by some weighting $\omega$ of a symmetric plabic graph $G$.  Choose a collection $F$ of edges of $G$ which has all of the following properties:
\begin{enumerate}
\item $F$ is symmetric about the diameter $d$.  That is, an edge $(u,v)$ of $G$ is in $F$ if and only if $(r(u),r(v))$ is also in $F$. 
\item \label{cover} Every vertex of $G$ is covered by $F$ at least once.
\item \label{trees} $F$ consists of a disjoint collection of trees, each of which has exactly one vertex on the boundary of $G$.
\end{enumerate}
It is not hard to show that such a collection of edges exists, since $F$ is symmetric.
Since $F$ is a disjoint union of trees, each with exactly one vertex on the boundary, we may gauge fix all edges in $F$ to $1$.  Let $\nu$ be the resulting weighting of $G$, and let $\nu'$ be the weighting of $G$ obtained by swapping the weights of $(u,v)$ and $(r(u),r(v))$ for each edge $(u,v)$ of $G$.  

By symmetry, $\nu'$ is another weighting of $G$ with $\partial_G(\nu') = X$.  It follows that $\nu$ and $\nu'$ are the same up to gauge transformations.
Moreover, $\nu'$ assigns the weight $1$ to each edge of $F$.  Since $F$ satisfies conditions \ref{cover} and \ref{trees} above, a sequence of gauge transformations which fixes all edges in $F$ must be trivial.  Hence $\nu = \nu'$ and $\nu$ is a symmetric weighting.  This completes the proof.
\end{proof}

The following is immediate.

\begin{cor}\label{both}Let $\Pi$ be a positroid cell.  Then the symmetric part of $\Pi$ is nonempty if and only if $\Pi$ can be realized as the image of a symmetric plabic graph $G$, where the symmetric part of $\Pi$ is precisely the image of all symmetric weightings of $G$ under the boundary measurement map.
\end{cor}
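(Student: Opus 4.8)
The plan is to assemble the statement from Theorem~\ref{allpt}, the remark immediately preceding it (that the image of every symmetric weighting of a symmetric plabic graph is a symmetric point), and two standard structural facts already invoked in the excerpt: that the boundary measurement map $\partial_G$ surjects onto the single positroid cell $\Pi_G$, and that distinct positroid cells are pairwise disjoint. Throughout I take $\Pi \subseteq \Gr_{\geq 0}(n,2n)$, since otherwise the symmetric part is not defined and no symmetric plabic graph can realize $\Pi$, so both conditions in the biconditional fail together.

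First I would dispose of the forward implication. Suppose the symmetric part of $\Pi$ is nonempty and pick a symmetric point $X$ in it. By Theorem~\ref{allpt}, $X = \partial_G(\nu)$ for a symmetric weighting $\nu$ of some symmetric plabic graph $G$. The image of $\partial_G$ is the positroid cell $\Pi_G$, so $X \in \Pi_G$; since $X \in \Pi$ and positroid cells are pairwise disjoint, $\Pi = \Pi_G$. Thus $\Pi$ is realized by the symmetric plabic graph $G$. For the reverse implication, suppose $\Pi = \Pi_G$ for a symmetric plabic graph $G$. The all-ones weighting of $G$ is symmetric, since reflection through $d$ carries edges to edges and fixes the weight $1$; by the remark preceding Theorem~\ref{allpt} its image under $\partial_G$ is a symmetric point, and this point lies in $\Pi_G = \Pi$. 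Hence the symmetric part of $\Pi$ is nonempty.

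Finally I would establish the identification of the symmetric part of $\Pi$ with the image of all symmetric weightings of $G$, which is where the only content beyond bookkeeping lies. One inclusion is immediate: every symmetric weighting of $G$ maps to a symmetric point of $\Pi_G = \Pi$, hence into the symmetric part of $\Pi$. For the reverse inclusion, let $X$ be any symmetric point of $\Pi$. Since $\partial_G$ surjects onto $\Pi_G = \Pi$, we may write $X = \partial_G(\omega)$ for some a priori non-symmetric weighting $\omega$. The construction in the proof of Theorem~\ref{allpt} — choosing a symmetric spanning forest $F$ with one boundary vertex per tree, gauge-fixing its edges to $1$ to obtain $\nu$, and comparing $\nu$ with its reflection $\nu'$ — then applies to produce a symmetric weighting of $G$ whose image is $X$.

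The step I expect to require the most care is this last one: verifying that the argument of Theorem~\ref{allpt} may be run on the prescribed graph $G$ rather than on some abstract symmetric plabic graph representing $\Pi$. This is legitimate because that proof uses nothing about the graph beyond the facts that it is a symmetric plabic graph, that its boundary measurement map surjects onto $\Pi$, and that the target point is symmetric — all of which hold for our $G$ and $X$. Once this is noted, both inclusions close and the corollary follows, confirming that it is indeed immediate from Theorem~\ref{allpt}.
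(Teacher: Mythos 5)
Your proof is correct and follows exactly the route the paper intends: the paper gives no argument beyond declaring the corollary ``immediate'' from Theorem~\ref{allpt} and the remark that symmetric weightings yield symmetric points, and your write-up is precisely the unwinding of that immediacy (surjectivity of $\partial_G$ onto $\Pi_G$, disjointness of positroid cells, and the all-ones weighting for the reverse implication). Your key observation --- that the gauge-fixing argument in the proof of Theorem~\ref{allpt} uses nothing about the graph beyond its being a symmetric plabic graph whose boundary measurement map hits the symmetric point $X$, so it may be run on the prescribed $G$ --- is exactly the point that makes the identification of the symmetric part with the image of symmetric weightings of $G$ legitimate.
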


A \emph{lollipop graph} is a plabic graph which has a lollipop at each boundary vertex.  A \emph{bridge graph} is a plabic graph obtained by starting with a lollipop graph, and repeatedly adding bridges adjacent to the boundary.  See Figure \ref{symbrd} for an example. Given $X \in \Gr_{\geq 0}(k,n)$, we can apply Proposition \ref{unbridge} repeatedly to construct a weighted plabic graph whose image under the boundary measurement map is the point $X$ \cite{Lam13b}.  We now give a symmetric version of this result.

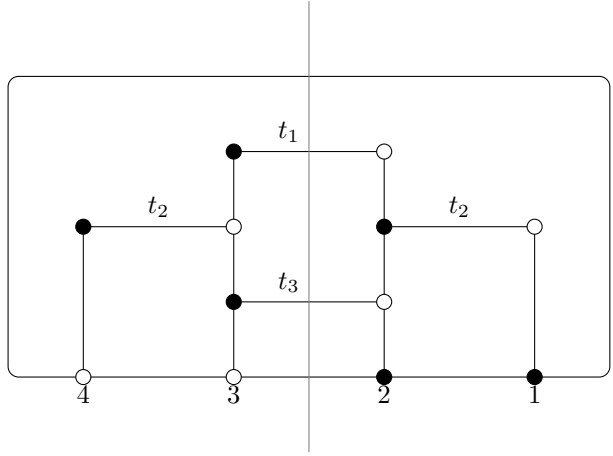
\begin{figure}[ht] 
\centering
\begin{tikzpicture}
\draw [rounded corners] (0,0) rectangle (8,4);
\draw (3,0) -- (3,3) -- (5,3) -- (5,0);
\draw (1,0) -- (1,2) -- (3,2);
\draw (5,2) -- (7,2) -- (7,0);
\draw (3,1) -- (5,1);
\bdot{1}{2};\bdot{3}{3};\bdot{5}{2};\bdot{3}{1};
\wdot{3}{2};\wdot{5}{3};\wdot{5}{1};\wdot{7}{2};
\wdot{1}{0};\wdot{3}{0};\bdot{5}{0};\bdot{7}{0};
\node [below] at (1,0) {$4$};
\node [below] at (3,0) {$3$};
\node [below] at (5,0) {$2$};
\node [below] at (7,0) {$1$};
\draw [gray] (4,5) -- (4,-1);
\node [above left] at (4,3) {$t_1$};
\node [above] at (2,2) {$t_2$};
\node [above] at (6,2) {$t_2$};
\node [above left] at (4,1) {$t_3$};
\end{tikzpicture}
\caption{A symmetric bridge graph with symmetric weights.}
\label{symbrd}
\end{figure}

\begin{thm}
\label{symbrd}
Let $X \in \Gr_{\geq 0}(k,n)$ be a symmetric point.  Then we can iteratively construct a weighted bridge graph corresponding to $X$ which is symmetric and has symmetric weights.
\end{thm}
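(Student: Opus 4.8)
The plan is to argue by induction on $\dim\Pi$, where $\Pi$ is the positroid cell of the symmetric point $X\in\Gr_{\geq 0}(n,2n)$, peeling bridges with Proposition \ref{unbridge} while maintaining three invariants at every stage: the graph is symmetric, the recorded weights are symmetric, and the current point remains symmetric. The combinatorial skeleton is exactly the reduction used to prove $\eqref{perm}\Rightarrow\eqref{graph1}$ in Theorem \ref{graphsLagrangian}, now run in reverse: since $X$ is symmetric its bounded affine permutation $f$ satisfies \eqref{perm}, so either $f$ is a product of mirror-paired fixed points (the base case, realized by a symmetric lollipop graph, since reflecting and recoloring swaps black and white lollipops), or $f$ has a bridge at some $(i,i+1)$. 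If $i=n$ the bridge straddles the diameter $d$ and is self-mirror; if $i\neq n$ then by \eqref{perm} there is also a bridge at the reflected pair $(i'-1,i')$, where $i'=2n+1-i$, and the index sets $\{i,i+1\}$ and $\{i'-1,i'\}$ are disjoint. Peeling the single bridge (respectively the mirror pair) sends $f$ to $fs_n$ (respectively $fs_is_{i'-1}$), which again satisfies \eqref{perm} and has strictly smaller cell; the induction then builds a symmetric weighted bridge graph for the peeled point, and we add the symmetric bridge configuration back with the recorded weight(s).

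The engine driving the symmetry bookkeeping will be the reflection involution $\sigma$ on $\Gr(n,2n)$ defined by $\Delta_J\circ\sigma=\Delta_{R(J)}$, so that $X$ is symmetric if and only if $\sigma(X)=X$. Using the bridge formula together with the set identity $R((I\cup\{a\})-\{b\})=(R(I)\cup\{b'\})-\{a'\}$ (already established in the proof of Theorem \ref{graphsLagrangian}), I would first prove the conjugation law
\[ \sigma(Z\cdot x_i(c)) = \sigma(Z)\cdot x_{i'-1}(c), \]
valid for every $Z$ and every $c$; note that for $i=n$ we have $i'-1=n$, so the bridge at $(n,n+1)$ is $\sigma$-fixed. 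This law immediately settles the self-mirror case: if $c$ is the Proposition \ref{unbridge} weight at $(n,n+1)$, then $\sigma(X\cdot x_n(-c))=\sigma(X)\cdot x_n(-c)=X\cdot x_n(-c)$, so the peeled point is again symmetric, and adding the single bridge back with weight $c$ keeps the graph and weighting symmetric. The same law, combined with the uniqueness of the peeling weight in Proposition \ref{unbridge} and the fact that $\sigma$ preserves symmetric cells, shows that the direct peeling weights of $X$ at the two mirror positions $(i,i+1)$ and $(i'-1,i')$ are equal.

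The remaining content is the mirror-pair case, and here lies the crux. Writing $X^{**}=X\cdot x_i(-c_1)\cdot x_{i'-1}(-c_2)$ for the two sequential Proposition \ref{unbridge} weights, the two elementary matrices commute (disjoint support), and expanding the double bridge formula and applying $\sigma$ term by term (using $R(J)^{(i)}=R(J^{(i'-1)})$ and the symmetry of $X$) gives
\[ \Delta_J(X^{**})-\Delta_{R(J)}(X^{**}) = (c_2-c_1)\,\big(\,[\,i{+}1\in J,\ i\notin J\,]\,\Delta_{(J\cup\{i\})-\{i+1\}}(X) - [\,i'\in J,\ i'{-}1\notin J\,]\,\Delta_{(J\cup\{i'-1\})-\{i'\}}(X)\,\big). \]
Thus $X^{**}$ is symmetric \emph{if and only if} $c_1=c_2$, so the whole theorem reduces to proving the bridge-locality statement that peeling the near bridge does not change the weight of the far bridge. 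I expect this to be the main obstacle, and I would prove it with Grassmann necklaces: first that the near peel leaves the necklace unchanged at the far position, $I_{i'}(fs_i)=I_{i'}(f)$, and second that neither index set in the far weight ratio has the pattern ``$i+1\in J,\ i\notin J$'' that the operator $x_i(-c_1)$ acts on. Via the necklace symmetry $R(I_{i+1})=I_{i'}$ from \eqref{necklace}, the second point translates into the membership implication $i'\in I_{i+1}(f)\Rightarrow i'-1\in I_{i+1}(f)$, which I would deduce from the existence of the bridge at $(i'-1,i')$ guaranteed by \eqref{perm}. Granting this, $c_1=c_2$, the peeled point $X^{**}$ is symmetric by the displayed identity, the induction applies, and adding the mirror pair of bridges back with the common weight $c_1$ to the symmetric bridge graph of $X^{**}$ produces a symmetric weighted bridge graph whose boundary measurement is $X$, completing the proof.
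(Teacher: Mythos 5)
Your global architecture matches the paper's proof almost step for step: induction on $\dim\Pi$, the lollipop base case, the split into a self-mirror bridge at $(n,n+1)$ versus a mirror pair at $(i,i+1)$ and $(i'-1,i')$, the equality of the two \emph{direct} peeling weights via $I_{i'}=R(I_{i+1})$ and $\Delta_{R(J)}(X)=\Delta_J(X)$, and the reduction of the whole theorem to the bridge-locality claim $c_1=c_2$; your conjugation law $\sigma(Z\cdot x_i(c))=\sigma(Z)\cdot x_{i'-1}(c)$ is correct and packages the $i=n$ case cleanly. The gap is in your proposed proof of bridge-locality. You claim that neither $I_{i'}$ nor $(I_{i'}\cup\{i'-1\})-\{i'\}$ can contain $i+1$ while omitting $i$ --- equivalently, via $R(I_{i+1})=I_{i'}$, that $i'\in I_{i+1}$ forces $i'-1\in I_{i+1}$ --- and that this follows from the existence of the bridge at $(i'-1,i')$. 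This is false. Take $n=4$ and the bounded affine permutation of type $(4,8)$ with $(f(1),\ldots,f(8))=(4,7,8,6,11,9,10,13)$: it satisfies condition \eqref{perm} of Theorem \ref{graphsLagrangian} (one checks $f(9-a)=17-f(a)$ for all $a$), has no fixed points, and has bridges at $(2,3)$ and at its mirror $(6,7)$; yet for $i=2$ one computes $I_7=\{1,3,7,8\}$, which contains $i+1=3$ and omits $i=2$ (equivalently, $I_3=\{3,4,5,7\}$ contains $i'=7$ but not $i'-1=6$). So the operator $x_2(-c_1)$ formally acts on \emph{both} Pl\"ucker coordinates in the far weight ratio, and your argument for $c_1=c_2$ collapses exactly where the real difficulty sits.

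This bad case is precisely where the paper spends most of its effort, and it cannot be argued away combinatorially: when $i\notin I_{i'}$ and $i+1\in I_{i'}$, one must show the far ratio is unchanged even though the bridge formula produces correction terms. The paper does this with matrix representatives. Since $i\notin I_{i'}$, the column $v_i$ lies in the span of the columns indexed by $I_{i'}\cap[i',i-1]^{cyc}$, so adding $-c_1v_i$ to $v_{i+1}$ does not change $\Delta_{I_{i'}}$ (the correction minor $\Delta_{(I_{i'}\cup\{i\})-\{i+1\}}$ vanishes); for the denominator set $(I_{i'}\cup\{i'-1\})-\{i'\}$ the analogous vanishing requires a genuine case analysis: the degenerate case $f(i'-1)=i'$, where instead $v_{i'}=cv_{i'-1}$ and the ratio is preserved by multilinearity, and then the dichotomy of whether $\bar{f}(i')\in[i',i]^{cyc}$, each branch needing a spanning argument to place $v_i$ in the span of the remaining columns of the submatrix. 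Your necklace bookkeeping (that $I^*_j=I_j$ for $j\neq i+1$, so $I^*_{i'}=I_{i'}$) is correct and is also the paper's first observation, but it only reduces the problem to this linear-algebra step; without supplying a replacement for your false pattern-avoidance claim, the induction does not close.
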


\begin{proof}
Let $\Pi$ be the positroid cell containing $X$, and let $f$ be the decorated permutation of $\Pi$.  We induct on the dimension of $\Pi$.  If the dimension is $0$, then $X$ may be represented by a lollipop graph $G$ \cite{Pos06}.  In this case, $X$ has a single non-zero Pl\"{u}cker coordinate $I$, corresponding to the positions of the white lollipops, which necessarily satisfies $I = R(I)$.  Thus $G$ has a white lollipop at $i$ if and only if $G$ has a black lollipop at $i'$, and $G$ is symmetric.

Suppose $\dim(\Pi) > 0$.  We note first that $\Pi$ satisfies condition (\ref{matroid}) from Theorem \ref{graphsLagrangian}.  Hence $i$ is a white fixed point of $f$ if and only if $i'$ is a black fixed point of $f$.  We may therefore reduce to the case where $f$ has no fixed points.  It follows that $f$ has a bridge at $(i,i+1)$ for some $1 \leq i \leq 2n-1$.  

First, suppose $f$ has a bridge at $(n,n+1)$.  Let $(I_{1},\ldots,I_{2n})$ denote the Grassmann necklace of $X$.  By Proposition \ref{unbridge}, the quantity
\begin{equation}c = \Delta_{I_{n+1}}(X)/\Delta_{(I_{n+1} \cup \{n\}) - \{n+1\}}(X)\end{equation}
is positive and well-defined, and the point $X^* = X \cdot x_{n}(-c)$ lies in $\Gr_{\geq 0}(n,2n).$

The action of $x_n(-c)$ fixes the Pl\"{u}cker coordinates of $X,$ except for those $\Delta_I$ with $n+1 \in I$ and $n \not\in I$.  Note that $I \in {{[2n]}\choose{n}}$ has this property if and only if $R(I)$ does.  For each $I$ with $n+1\in I$ and $n \not\in I$, we have
\begin{align}
\Delta_{R(I)}(X^*) &= \Delta_{R(I)}(X) - c \Delta_{(R(I) \cup \{n\}) - \{n+1\}}(X)\\
&= \Delta_I(X) - c \Delta_{R((I \cup \{n\} )- \{n+1\})}(X)\\
&= \Delta_I(X) - c \Delta_{(I \cup \{n\})-\{n+1\}}(X)\\
& = \Delta_I(X^*)
\end{align}
So $X^*$ is a symmetric point.
By induction, we may build a symmetric bridge graph representing the point $X^*$, which has symmetric weights.  Adding a bridge $(n,n+1)$ with edge weight $c$ gives the desired weighted graph, and we are done in this case.

Next, suppose $f$ does not have a bridge at $(n,n+1)$.  Then $f$ has a bridge $(i,i+1)$ for some $1 \leq i \leq 2n-1$ with $i \neq n$.  Hence by symmetry, $f$ has a pair of commuting bridges at $(i,i+1)$ and $(i'-1,i'),$ respectively.  Without loss of generality, assume $1 \leq i \leq n-1$.
By Theorem 2.1, we have $I_{i'} = R(I_{i+1})$, which implies
\begin{equation} (I_{i'} \cup \{i'-1\}) - \{i'\} = R((I_{i+1} \cup \{i\}) - \{i+1\}).\end{equation}
Since $X$ satisfies $\Delta_{R(I)}(X) = \Delta_I(X)$ for all $I$, we have
\begin{equation}c \coloneqq \Delta_{I_{i+1}}(X)/\Delta_{(I_{i+1} \cup \{i\}) - \{i+1\}}(X) = \Delta_{I_{i'}}(X)/\Delta_{(I_{i'} \cup \{i'-1\}) - \{i'\}}(X)\end{equation}
where both quantities are positive and well-defined, as above.  

Once again, let $X^* = X \cdot x_i(-c)$.  Let $(I^*_{1},\ldots,I^*_{2n})$ denote the Grassmann necklace of $X^*$, and let 
\begin{equation}c^* = \Delta_{I^*_{i'}}(X^*)/\Delta_{(I^*_{i'} \cup \{i'-1\})- \{i'\}}(X^*).\end{equation}  We claim that $c^* = c$. 
First, note that $I^*_j = I_j$ unless $j = i+1$, so $I^*_{i'} = I_{i'}$.  If either $i \in I_{i'}$ or $i+1 \not\in I_{i'}$, then we have
\begin{equation}\Delta_{I_{i'}}(X) = \Delta_{I_{i'}}(X^*)\end{equation}
\begin{equation}\Delta_{(I_{i'} \cup \{i'-1\}) - \{i'\}}(X)=\Delta_{(I_{i'} \cup \{i'-1\}) - \{i'\}}(X^*),\end{equation}
and so $c^* = c$, as desired.  

Suppose $i \not \in I_{i'}$ and $i+1 \in I_{i'}$.  Let $M$ be a matrix representative for $X$ with columns $v_{1},\ldots,v_{2n}$.  Suppose $f(i'-1) = i'$.  Then $v_{i'}$ is a scalar multiple of $v_{i'-1}$, and in particular we have $v_{i'} = cv_{i'-1}$.  Let $v_1^*,\ldots,v_{2n}^*$ be the columns of $M^* = M \cdot x_i(-c)$.  Then $v_{i'}^*= v_{i'}$ and $v_{i'-1}^*=v_{i'-1}$.  It follows that we have $c^* = c$ as desired.

Next, suppose $f(i'-1) \neq i'$, and consider the submatrix $S$ of $M$ with columns indexed by $I_{i'}$.  Since $i \not\in I_{i'}$, it follows that $v_i$ is in the span of the columns of $S$ indexed by elements of $[i',i-1]^{cyc}$.  Hence adding a multiple of $v_i$ to the column $v_{i+1}$ does not change the determinant of $S$, and $\Delta_{I_{i'}}(X^*)= \Delta_{I_{i'}}(X)$.  Thus the ratios $c^*$ and $c$ have the same numerator.  

We must now show that $c^*$ and $c$ have the same denominator.  Since $f(i'-1) \neq i',$ we have $i' \in I_{i'-1} \cap I_{i'}$.  It follows that
\begin{equation} (I_{i'} \cup \{i'-1\}) - \{i'\} = (I_{i'-1} \cup \{\bar{f}(i'-1)\}) - \{i'\}.\end{equation}
Let $S^*$ be the submatrix of $M$ with columns indexed by this set.
There are two cases to consider, depending on whether $\bar{f}(i')$ lies in the cyclic interval $[i',i]^{cyc}$.  Throughout this proof, we write $v_a,\ldots,v_b$ to denote the columns of $S$ indexed by elements of $[a,b]^{cyc}$.

For the first case, suppose $\bar{f}(i') \in [i',i]^{cyc}.$ Since $G$ has a bridge at $(i'-1,i')$ and $\bar{f}(i'-1) \neq i'$, this means $\bar{f}(i'-1)$ must be in the cyclic interval $[i'+1,i-1]^{cyc}$.  Let $y = \bar{f}(i'-1)$.
Then we may express $v_y$ as a linear combination of the columns indexed by $I_{i'-1}$ which form a basis of $\langle v_{i'-1},\ldots,v_{y-1}\rangle$.
Note that we must have a nonzero coefficient of $v_{i'}$ in this linear combination, since the columns of $S^*$ are linearly independent.

Hence $v_{i'}$ lies in the the span of the columns of $S^*$ indexed by elements of $[i'-1,y]^{cyc}$.  In particular, $v_{i'}$ lies in the span of the columns of $S^*$ indexed by elements of $[i'-1,i-1]^{cyc},$ which therefore span $\langle v_{i'-1},v_{i'},\ldots,v_{i-1} \rangle$.  Thus $v_i$ lies in the span of the columns of $S^*$ indexed by elements of $[i'-1,y]^{cyc}$ and we are done with this case.

For the second case, suppose $\bar{f}(i')$ does not lie in the cyclic interval $[i',i]^{cyc}$.  Then $v_{i'}$ is linearly independent of the columns 
$v_{i'+1},\ldots,v_i,$ so $(I_{i'} \cup \{i'-1\})-\{i'\}$ contains a basis for $\langle v_{i'+1},\ldots,v_{i-1} \rangle$.  Since $v_i \not\in I_{i'}$, the corresponding columns of $S^*$ contain $v_i$ in their span, and this case is complete.

Hence $c^* = c$, and so $X^{**} = X \cdot x_i(-c)x_{i'-1}(-c)$ lies in a positroid cell of dimension two less than $\Pi$.  It is straightforward to check that $X^{**}$ is a symmetric point. By induction, we may build a symmetric plabic graph for $X^{**}$ with symmetric edge weights.  Adding two bridges of weight $c$ at $(i'-1,i')$ and $(i,i+1)$ respectively gives a symmetric plabic network for $X$, and the proof is complete. 

\end{proof}

Note that the sequence of bridges added above depends only on the bounded affine permutation of $\Pi$, and the fact that $X$ is symmetric.  Hence, the proof of Theorem \ref{symbrd} yields a method for constructing a symmetric bridge graph $G$ corresponding to $\Pi$, and explicitly realizing each point in the symmetric part of $\Pi$ with a symmetric weighting of $G$.  We have thus demonstrated a symmetric analog of the bridge-graph construction for $\Gr_{\geq 0}(k,n)$ found in \cite{Lam13b}.

\section*{Acknowledgements} 

We are grateful to Francesca Gandini for helpful suggestions.

\bibliographystyle{plain}
\bibliography{combo}

\end{document}